\definecolor{darkgreen}{rgb}{0.0, 0.5, 0.0}
\newtheorem*{theorem1*}{Theorem \ref{thm:sigmaphi-sigmapsi-lorentzian}}
\newtheorem*{question1*}{Question \ref{q:1}}
\newtheorem{theorem}[subsection]{Theorem}
\newtheorem{proposition}[subsection]{Proposition}
\newtheorem{lemma}[subsection]{Lemma}
\newtheorem{corollary}[subsection]{Corollary}
\theoremstyle{definition}
\newtheorem{definition}[subsection]{Definition}
\newtheorem{question}[subsection]{Question}
\newtheorem{example}[subsection]{Example}
\theoremstyle{remark}
\newtheorem{remark}[subsection]{Remark}
\numberwithin{equation}{section}
\def\F{\mathcal{F}}
\def\vol{{\rm vol}}
\def\F{{\mathcal{F}}}
\def\R{\mathbb{R}}
\newcommand{\be}{\begin{equation}}
\newcommand{\ee}{\end{equation}}
\newcommand{\B}{\bigg}
\newcommand{\ef}[1]{{\sf ef}(#1)}
\newcommand{\NN}{\mathbb N}
\newcommand{\ZZ}{\mathbb Z}
\newcommand{\RR}{\mathbb R}
\newcommand{\bd}{\begin{definition}}
\newcommand{\ed}{\end{definition}}
\newcommand{\bt}{\begin{theorem}}
\newcommand{\et}{\end{theorem}}
\newcommand{\bl}{\begin{lemma}}
\newcommand{\el}{\end{lemma}}
\newcommand{\bp}{\begin{proposition}}
\newcommand{\ep}{\end{proposition}}
\newcommand{\bc}{\begin{corollary}}
\newcommand{\ec}{\end{corollary}}
\font\co=lcircle10
\def\jr{\smash{\raise2pt\hbox{\co \rlap{\rlap{\char'005} \char'007}}
               \raise6pt\hbox{\rlap{\vrule height6.5pt}}
               \raise2pt\hbox{\rlap{\hskip4pt \vrule height0.4pt depth0pt
                width7.7pt}}}}
\def\je{\smash{\raise2pt\hbox{\co \rlap{\rlap{\char'005}
                \phantom{\char'007}}}\raise6pt\hbox{\rlap{\vrule height6pt}}}}
\def\+{\smash{\lower2pt\hbox{\rlap{\vrule height14pt}}
                \raise2pt\hbox{\rlap{\hskip-3pt \vrule height.4pt depth0pt
                width14.7pt}}}}
\def\textcross{\ \smash{\lower4pt\hbox{\rlap{\hskip4.15pt\vrule height14pt}}
                \raise2.8pt\hbox{\rlap{\hskip-3pt \vrule height.4pt depth0pt
                width14.7pt}}}\hskip12.7pt}
\def\textelbow{\ \hskip.1pt\smash{\raise2.8pt%
                \hbox{\co \hskip 4.15pt\rlap{\rlap{\char'005} \char'007}
                \lower6.8pt\rlap{\vrule height3.5pt}
                \raisebox{?}{?}3.6pt\rlap{\vrule height3.5pt}}
                \raise2.8pt\hbox{%
                  \rlap{\hskip-7.15pt \vrule height.4pt depth0pt width3.5pt}%
                  \rlap{\hskip4.05pt \vrule height.4pt depth0pt width3.5pt}}}
                \hskip8.7pt}
\newcommand{\old}[1]{}
\def\ee{{\bf e}}
   \def\vol{{\rm vol}}
 \def\f_H{{\bf w}}
 \def\f{{\bf f}}
 \def\a{{\bf a}}
\def\R{\mathbb{R}}
\def\Z{\mathbb{Z}}
 \def\F{\mathcal{F}}
\def\ee{{\bf e}}
   \def\vol{{\rm vol}}
 \def\f_H{{\bf w}}
 \def\f{{\bf f}}
 \def\a{{\bf a}}
\def\R{\mathbb{R}}
\def\Z{\mathbb{Z}}
 \def\F{\mathcal{F}}
\def\ee{{\bf e}}
   \def\vol{{\rm vol}}
 \def\f_H{{\bf w}}
 \def\f{{\bf f}}
 \def\a{{\bf a}}
\def\R{\mathbb{R}}
\def\Z{\mathbb{Z}}
 \def\F{\mathcal{F}}
\newcommand{\eee}{\end{equation}}
\newcommand\multiset[2]%
\def\R{\mathbb{R}}
\title{Lorentzian polynomials from polytope projections}
\author{Karola M\'esz\'aros}
\author{Linus Setiabrata}
\thanks{Karola M\'esz\'aros received support from NSF Grant DMS-1501059, CAREER NSF Grant DMS-1847284
and  a von Neumann Fellowship funded by the Friends of the Institute for Advanced Study.}
\address{Department of Mathematics, Cornell University, Ithaca, NY 14853}
\email{karola@math.cornell.edu, ls823@cornell.edu}
\date{\today}
\begin{document}

\maketitle

\begin{abstract}  Lorentzian polynomials, recently introduced by Br\"and\'en and Huh, generalize the notion of log-concavity of sequences to homogeneous polynomials whose supports are integer points of generalized permutahedra.  Br\"and\'en and Huh show that  normalizations of  polynomials equaling integer point transforms of generalized permutahedra  are Lorentzian; moreover, normalizations of certain projections of  integer point transforms  of generalized permutahedra with zero-one vertices  are also Lorentzian. Taking this polytopal perspective further, we show that normalizations of certain projections of integer point transforms of flow polytopes  (which, before projection, are not Lorentzian), are also Lorentzian.  \end{abstract}

\section{Introduction} 

Log-concavity of sequences is a classical notion, which often is either very easy or notoriously difficult to prove.  A sequence $a_0, a_1, \ldots, a_n$ is said to be log-concave if $a_i^2\geq a_{i-1}a_{i+1}$ for $i \in [n-1]$.  In groundbreaking recent work Br\"and\'en and Huh \cite{bh2019} introduced   \textit{Lorentzian polynomials} (see Section \ref{sec:lor} for definition), which generalize the  notion of log-concavity. Just one of their theory's many consequences are the celebrated Alexandrov-Fenchel inequalities on mixed volumes of Minkowski sums of polytopes; these inequalities follow from the volume polynomial being Lorentzian \cite[Theorem 9.1]{bh2019}. 

\medskip

Our motivation for the present paper is simple: (whenever possible) understand Lorentzian polynomials  \textit{polytopally}.   Br\"and\'en and Huh show that the support of a Lorentzian polynomial form the integer points of a \textit{generalized permutahedron}, a beautiful polytope studied extensively by Postnikov in \cite{postnikov2009}.   \medskip

 Recall that for a polytope $P \subset \mathbb{R}^n$, the  \textbf{integer point transform} of $P$  is defined as  \begin{equation} \sigma_P(x_1, \ldots, x_n)=\sum_{{\bf p} \in P\cap \mathbb{Z}^n}{\bf x}^{\bf p},  \text{ where ${\bf x}^{\bf p}=\prod_{i=1}^n x_i^{p_i}$.} \end{equation} 
 
 Define the \textbf{normalization operator} $N$ on $\mathbb{R}[x_1,\ldots,x_n]$ by
\begin{equation}\label{eq:norm}
N(\mathbf{x}^{\bf \alpha})=\frac{\mathbf{x}^{\bf \alpha}}{{\bf \alpha}!},
\end{equation}
where for a vector ${\bf \alpha} = (\alpha_1, \dots, \alpha_n)$ of nonnegative integers we write $\alpha!$ to mean $\prod_{i=1}^n\alpha_i!$.
\medskip

Br\"and\'en and Huh  show that the normalization of any polynomial equaling the integer point transform of a generalized permutahedron is always a Lorentzian polynomial \cite[Theorem 7.1 (4) \& (7)]{bh2019}. It also follows from \cite[Theorem 2.10]{bh2019} and \cite[Corollary 6.7]{bh2019} that (normalizations of) certain projections of  integer point transforms  of generalized permutahedra with 01 vertices are also Lorentzian. In joint work with Huh, Matherne and St.~Dizier the first author has  shown that the normalization of  certain projections of  the integer point transforms of  Gelfand-Tsetlin polytopes are Lorentzian \cite[Theorem 1]{hmms2019}. The question lurking in the background of  the present work is:

 \begin{question} \label{q:1} {\it What conditions on the polytope/projection pair ascertain that the normalization of the  projection of the integer point transform of the polytope is Lorentzian?}  \end{question}
 
 \medskip

The current paper adds a natural class of polytope/projection pairs yielding Lorentzian polynomials: flow polytopes/projection onto a coordinate hypersurface.    

\medskip

The  \textbf{flow polytope} $\mathcal{F}_G({\bf a})$ associated to    a loopless graph $G$ on vertex set $[n+1]$ with edges directed from smaller to larger vertices and to the \textbf{netflow vector} ${\bf a}=(a_1,\,\ldots, a_{n+1}) \in \Z^{n+1}$ is:
	\begin{equation}\label{def:fp}\mathcal{F}_G({\bf a})= \{f\in \R^{E(G)}_{\geq 0}\colon\, M_Gf={\bf a} \}, \end{equation} where $M_G$  is the incidence matrix of $G$; that is,   the columns of $M_G$ are the vectors $e_i-e_j$ for $(i,j)\in E(G)$, $i<j$,  where $e_i$ is the $i$-th standard basis vector in $\R^{n+1}$. 
	The points $f \in \mathcal{F}_G({\bf a})$ are called (${\bf a}$-)\textbf{flows} (on $G$).
%\smallskip

\vspace{.05in}
Observe that the number of integer points in $\mathcal{F}_G({\bf a})$ is  the number of ways to write ${\bf a}$ as a nonnegative integral combination of the vectors $e_i-e_j$ for edges $(i,j)$ in $G$, $i<j$, which we refer to as the \textbf{Kostant partition function} $K_G({\bf a})$.

We define two natural projections $\varphi$ and $\psi$ of $\mathcal{F}_G({\bf a})$ onto generalized permutahedra in Propositions \ref{prop:flow-project-gp} and \ref{prop:gp-project-smallgp} in Section \ref{sec:projections-of-flow-polytopes}. 
The projections $\varphi$ and $\psi$ induce projections on the integer point transform $\sigma_{\mathcal F_G(\mathbf a)}({\bf x})$ of $\mathcal F_G(\mathbf a)$, acting on monomials via $\mathbf x^f \mapsto\mathbf x^{\varphi(f)}$ and $\mathbf x^f\mapsto\mathbf x^{\psi(f)}$. The resulting projected polynomials are denoted
\begin{equation} \label{def:1}
\sigma_{G(\mathbf a)}^\varphi(\mathbf x) \overset{\rm def}=\sum_{\mathbf p \in \mathcal F_G(\mathbf a) \cap \mathbb{Z}^{|E(G)|}} \mathbf x^{\varphi(\mathbf p)},
\end{equation} 
and
\begin{equation} \label{def:2}
\sigma_{G(\mathbf a)}^\psi(\mathbf x) \overset{\rm def}=\sum_{\mathbf p  \in \mathcal F_G(\mathbf a) \cap \mathbb{Z}^{|E(G)|}} \mathbf x^{\psi(\mathbf p)}.
\end{equation}

\medskip

While the normalization of the integer point transform of $\mathcal F_G(\mathbf a)$ is not Lorentzian in general, we prove that the normalizations of its projections $\sigma_{G(\mathbf a)}^\varphi$ and $\sigma_{G(\mathbf a)}^\psi$ are always Lorentzian:
 
\begin{theorem1*} 
Let $G$ be a loopless  directed graph on the vertex set  $[n+1]$ with a unique sink, and let $\mathbf a = (a_1, \dots, a_{n+1}) \in \Z_{\geq 0}^{n}\times \Z_{\leq 0}$. The polynomials $N(\sigma_{G(\mathbf a)}^\varphi)$ and $N(\sigma_{G(\mathbf a)}^\psi)$ are Lorentzian.
\end{theorem1*}

  Theorem \ref{thm:sigmaphi-sigmapsi-lorentzian} implies  that the Kostant partition function is log-concave along root directions (Corollary~\ref{cor:kostant-log-concave}).  We remark that  log-concavity of the Kostant partition function   along root directions is also a corollary of volume polynomials (of flow polytopes) being Lorentzian (Theorem~\ref{lem:vol-is-lorentzian}).

\bigskip

\noindent {\bf Roadmap of the paper.}   Section \ref{sec:back} contains the necessary background on Lorentzian polynomials, generalized permutahedra and flow polytopes. Section \ref{sec:projections-of-flow-polytopes} introduces the projections $\varphi$ and $\psi$ of $\mathcal{F}_G({\bf a})$ onto generalized permutahedra that we are interested in, while Section \ref{subsec:the-fibers-of-our-projection} studies their fibers. Section \ref{sec:?} establishes our main result, Theorem \ref{thm:sigmaphi-sigmapsi-lorentzian}. Section \ref{sec:proj} prods Question \ref{q:1}. \section{Background} 
\label{sec:back}

In this section we give background on the main players of the paper: Lorentzian polynomials, generalized permutahedra and flow polytopes. 

\subsection{Lorentzian polynomials and generalized permutahedra} \label{sec:lor}
Let $\mathbb{N}=\{0,1, 2, \ldots\}$, and denote by $e_i$ the $i$th standard basis vector of $\mathbb{N}^n$. A subset $J\subseteq \mathbb{N}^n$ is called \textbf{$\mathrm{M}$-convex} if for any index $i$ and any $\alpha,\beta \in J$ whose $i$th coordinates satisfy $\alpha_i > \beta_i$,
there is an index $j$ satisfying
\[\alpha_j<\beta_j, \ \  \alpha-e_i+e_j \in J, \ \ \text{and} \ \ \beta-e_j+e_i \in J.\]

The convex hull of an $\mathrm{M}$-convex set is a polytope also called a \textbf{generalized permutahedron}. A special class of generalized permutahedra consist of Minkowski sums of scaled \textbf{coordinate simplices}: for a subset $S\subseteq [n]$, the coordinate simplex $\Delta_S\subseteq \RR^n$ is the convex hull of the coordinate basis vectors $\{e_i\}_{i \in S}$. Minkowski sums of scaled coordinate simplices are called \textbf{$y$-generalized permutahedra}.

Let $\mathrm{H}^d_n$ be the space of degree $d$ homogeneous polynomials with real coefficients in the $n$ variables $x_1,\ldots,x_n$. For $f \in \mathrm{H}^d_n$, we write $\text{supp}(f) \subseteq \mathbb{N}^n$ for the support of $f$. For $f\in \mathrm{H}^d_n$, denote by $\frac{\partial}{\partial x_{i}} f$ the partial derivative of $f$ relative to $x_i$.
The \textbf{Hessian} of a homogenous quadratic polynomial $f \in \mathrm H^2_n$ is the symmetric $n\times n$ matrix $H = (H_{ij})_{i,j\in[n]}$ defined by $H_{ij} = \partial_i\partial_j f$.
	The set $\mathrm{L}^d_n$ of \textbf{Lorentzian polynomials} with degree $d$ in $n$ variables is defined as follows.
	Set $\mathrm L_n^1\subseteq \mathrm H^1_n$ to be the set of all linear polynomials with nonnegative coefficients. Let $\mathrm{L}^2_n \subseteq \mathrm{H}^2_n$ be the subset of quadratic polynomials with nonnegative coefficients whose Hessians have at most one positive eigenvalue and which have $\mathrm{M}$-convex support. For $d>2$, define $\mathrm{L}^d_n \subseteq \mathrm{H}^d_n$ recursively by 
	\[\mathrm{L}^d_n=\left\{f \in \mathrm{M}^d_n\colon \frac{\partial}{\partial x_{i}} f \in \mathrm{L}^{d-1}_n \mbox{ for all } i\right\}.\]
	where $\mathrm{M}^d_n \subseteq \mathrm{H}^d_n$ is the set of polynomials with nonnegative coefficients whose supports are $\mathrm{M}$-convex.

Since $f \in \mathrm{M}^d_n$ implies $\frac{\partial}{\partial x_{i}} f \in \mathrm{M}^{d-1}_n$, we have
\[
\mathrm{L}^d_n=\left\{f \in \mathrm{M}^d_n\colon \frac{\partial}{\partial x_{i_1}}\frac{\partial}{\partial x_{i_2}} \cdots \frac{\partial}{\partial x_{i_{d-2}}} f \in \mathrm{L}^{2}_n \mbox{ for all }i_1,i_2,\ldots,i_{d-2}\in [n] \right\}.
\]

Recall  the \textbf{normalization operator} $N$ on $\mathbb{R}[x_1,\ldots,x_n]$:
%\begin{equation}\label{eq:norm}
$$N(\mathbf{x}^{\alpha})=\frac{\mathbf{x}^\alpha}{\alpha!},$$
%\end{equation}
where for a vector $\alpha = (\alpha_1, \dots, \alpha_n)$ of nonnegative integers we write $\alpha!$ to mean $\prod_{i=1}^n\alpha_i!$.  

For a quadratic polynomial 
\begin{equation*}
f(\mathbf x) = \sum_{1\leq i \leq j \leq n}c_{ij}x_ix_j \in \mathrm M_n^2,
\end{equation*} 
observe that the $ij$-th entry of the Hessian of $N(f)$, namely the quantity $\partial_i\partial_jN(f)$, is the coefficient $c_{ij}$ of $x_ix_j$ in $f$. Thus, asking whether $N(f)$ is Lorentzian, equivalently whether the Hessian of $N(f)$ has at most one positive eigenvalue, can be phrased purely in terms of the coefficients of $f$. For arbitrary polynomials $f \in \mathrm M_n^d$ we use the following lemma:

\begin{lemma}
\label{lem:NDN-is-coeff}
The linear operator $N^{-1}\frac\partial{\partial x_i}N$ acts on polynomials by
\begin{equation}
\label{eq:NDN-is-coeff}
\B(N^{-1}\frac{\partial}{\partial x_i}N\B) \colon \sum_\alpha c_\alpha \mathbf x^\alpha \mapsto \sum_{\alpha\colon \alpha_i \geq 1}c_\alpha\mathbf x^{\alpha - e_i}.
\end{equation}
\end{lemma}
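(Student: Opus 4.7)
The plan is to verify the identity by direct computation on a single monomial and then extend by linearity of all three operators $N$, $\partial/\partial x_i$, and $N^{-1}$. Since both sides of \eqref{eq:NDN-is-coeff} depend linearly on the input polynomial $\sum_\alpha c_\alpha \mathbf x^\alpha$, it suffices to check that $(N^{-1}\frac{\partial}{\partial x_i}N)(\mathbf x^\alpha) = \mathbf x^{\alpha - e_i}$ when $\alpha_i\geq 1$ and equals $0$ when $\alpha_i = 0$.

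I would first compute $N(\mathbf x^\alpha) = \mathbf x^\alpha/\alpha!$ from the definition of $N$. Applying $\partial/\partial x_i$ gives $0$ if $\alpha_i = 0$, and otherwise gives $\alpha_i\,\mathbf x^{\alpha - e_i}/\alpha!$. The key bookkeeping step is the factorial simplification
\[
\frac{\alpha_i}{\alpha!} = \frac{\alpha_i}{\alpha_i!\prod_{j\neq i}\alpha_j!} = \frac{1}{(\alpha_i - 1)!\prod_{j\neq i}\alpha_j!} = \frac{1}{(\alpha - e_i)!},
\]
so that $\frac{\partial}{\partial x_i}N(\mathbf x^\alpha) = \mathbf x^{\alpha - e_i}/(\alpha - e_i)! = N(\mathbf x^{\alpha - e_i})$ whenever $\alpha_i\geq 1$. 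Finally, applying $N^{-1}$ (which is well-defined on monomials since $N$ is a diagonal operator in the monomial basis with nonzero eigenvalues $1/\alpha!$) yields $\mathbf x^{\alpha - e_i}$, as required.

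There is no serious obstacle here; this is a routine verification whose only subtlety is the factorial cancellation above. I would present the argument in a few lines, emphasizing the telescoping $\frac{\partial}{\partial x_i}\circ N = N \circ (\text{shift down by } e_i)$ on monomials with $\alpha_i\geq 1$, which is precisely the identity that makes $N^{-1}\frac{\partial}{\partial x_i}N$ act as the coefficient-shift operator described on the right-hand side of \eqref{eq:NDN-is-coeff}.
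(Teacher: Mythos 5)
Your verification is correct: the monomial computation with the factorial cancellation $\alpha_i/\alpha! = 1/(\alpha - e_i)!$, extended by linearity, is exactly the routine argument intended here. The paper in fact states Lemma~\ref{lem:NDN-is-coeff} without proof, so your write-up simply supplies the standard check the authors left implicit.
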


%\begin{proof}
%This follows from verifying that both maps appearing in \eqref{eq:NDN-is-coeff} are linear and agree on monic monomials.
%\end{proof}

We arrive at the following criterion for Lorentzian polynomials. It is our main workhorse in the proof of Theorem~\ref{thm:sigmaphi-sigmapsi-lorentzian}.

\begin{lemma}
\label{lem:lorentzian-criterion}
Let $f$ be a homogeneous polynomial of degree $d\geq 2$, and suppose
\[
f(\mathbf x) = \sum_\alpha c_\alpha\mathbf x^\alpha \in \mathrm M_n^d.
\] 
For each $\mathbf d = (d_1, \dots, d_n)$ with $d_1 + \dots + d_n = d-2$ and $d_i \in \mathbb{Z}_{\geq 0}$ for $i \in [n]$, define the $n\times n$ matrix
\[
H_{\mathbf d} = (H_{ij;\mathbf d})_{i,j\in[n]}; \hspace{1cm} H_{ij;\mathbf d} = c_{\mathbf d + e_i + e_j}
\]
consisting of coefficients of $f$. Then $N(f)\in \mathrm L_n^d$ if and only if $H_{\mathbf d}$ has at most one positive eigenvalue for each $\mathbf d$.
\end{lemma}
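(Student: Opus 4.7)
The plan is to unpack the iterated-derivative characterization of Lorentzian polynomials and translate it, via Lemma~\ref{lem:NDN-is-coeff}, into the stated condition on coefficient submatrices. Since normalization is a positive rescaling of coefficients, $f \in \mathrm M_n^d$ immediately gives $N(f)\in \mathrm M_n^d$, so the recursive definition reduces the claim $N(f)\in \mathrm L_n^d$ to verifying $\partial_{i_1}\cdots\partial_{i_{d-2}}N(f) \in \mathrm L_n^2$ for every choice of indices $i_1,\dots,i_{d-2} \in [n]$.

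Writing $D_i \overset{\rm def}= N^{-1}\partial_i N$, Lemma~\ref{lem:NDN-is-coeff} shows that $D_i$ is a coordinatewise exponent shift, so the operators $D_i$ mutually commute. The identity $\partial_i N = ND_i$ then iterates to
\[
\partial_{i_1}\cdots \partial_{i_{d-2}}N \;=\; ND_{i_1}\cdots D_{i_{d-2}},
\]
and this composition depends only on the multiplicity vector $\mathbf d = e_{i_1}+\cdots+e_{i_{d-2}}$. Applied to $f = \sum_\alpha c_\alpha \mathbf x^\alpha$ and using that $f$ is homogeneous of degree $d$, it produces the quadratic
\[
g_{\mathbf d}(\mathbf x) \;\overset{\rm def}=\; \sum_{\beta \in \mathbb N^n,\ |\beta|=2} c_{\mathbf d + \beta}\,\mathbf x^\beta.
\]
Hence the iterated derivative equals $N(g_{\mathbf d})$, and the problem reduces to showing that $N(g_{\mathbf d}) \in \mathrm L_n^2$ for every $\mathbf d$ with $|\mathbf d| = d-2$.

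To finish, I would invoke the observation made just before the statement of the lemma: for a quadratic $g = \sum_{i\leq j}b_{ij}x_ix_j \in \mathrm M_n^2$, the $(i,j)$-entry of the Hessian of $N(g)$ equals $b_{ij}$. Specialized to $g_{\mathbf d}$, whose coefficient of $x_ix_j$ is $c_{\mathbf d + e_i+e_j}$, this identifies the Hessian of $N(g_{\mathbf d})$ with $H_{\mathbf d}$. Nonnegativity of the coefficients of $g_{\mathbf d}$ is inherited from $f \in \mathrm M_n^d$, and M-convexity of its support follows from the fact (stated in the excerpt just after the definition of $\mathrm L_n^d$) that iterated partial derivatives preserve the class $\mathrm M_n^{\bullet}$. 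Thus $N(g_{\mathbf d}) \in \mathrm L_n^2$ exactly when $H_{\mathbf d}$ has at most one positive eigenvalue, giving the claim.

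The only non-bookkeeping input is the commutation-and-composition identity for the shift operators $D_i$, but this is immediate from the explicit formula \eqref{eq:NDN-is-coeff}; so I do not anticipate any genuine obstacle, as the content of the lemma is essentially a faithful dictionary between "Hessians of all $(d-2)$-fold pure partial derivatives of $N(f)$" and "size-$n$ coefficient submatrices of $f$ indexed by the remaining degrees $\mathbf d$".
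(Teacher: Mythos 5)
Your proposal is correct and follows essentially the same route as the paper's proof: both reduce to the $(d-2)$-fold partial derivatives via the operator $N^{-1}\partial N$ of Lemma~\ref{lem:NDN-is-coeff}, identify the resulting quadratic's normalized Hessian with the coefficient matrix $H_{\mathbf d}$, and use that normalization and differentiation preserve $\mathrm M$-convexity. The paper's write-up is just a more compressed version of the same argument.
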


\begin{proof}
Note that normalization and differentiation preserve $M$-convexity of the support of a polynomial. By Lemma~\ref{lem:NDN-is-coeff}, we obtain
\[
N^{-1}\partial_{\mathbf d}N(f) = \sum_{\alpha\colon \alpha \geq \mathbf d} c_\alpha \mathbf x^{\alpha - \mathbf d}\in \mathrm M_n^2.
\]  
Because the Hessian of $N(N^{-1}\partial_{\mathbf d}N(f)) = \partial_{\mathbf d}N(f)$ is $H_{\mathbf d}$, by definition $N(f)$ is Lorentzian if and only if $H_{\mathbf d}$ has at most one positive eigenvalue for each $\mathbf d$.
\end{proof}

The coefficients of Lorentzian polynomials satisfy a log-concavity inequality as in Proposition \ref{lem:log-concavity-of-coeffs} below. It is in this sense that Lorentzian polynomials generalize the notion of log-concavity.

\begin{proposition}[{\cite[Proposition 9.4]{bh2019}}]
\label{lem:log-concavity-of-coeffs}
If $f(\mathbf x) = \sum_\alpha c_\alpha \mathbf x^\alpha$ is a homogeneous polynomial on $n$ variables so that $N(f)$ is Lorentzian, then for any $\alpha\in\NN^n$ and any $i,j \in [n]$ the inequality
\[
c_\alpha^2 \geq c_{\alpha + e_i - e_j} c_{\alpha - e_i + e_j}
\]
holds.
\end{proposition}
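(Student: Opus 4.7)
The plan is to reduce to the quadratic case via Lemma~\ref{lem:lorentzian-criterion} and read off the inequality from a suitable $2 \times 2$ principal minor. First I would dispense with the trivial cases: if $i = j$ the inequality reads $c_\alpha^2 \geq c_\alpha^2$, and if any component of $\alpha + e_i - e_j$ or $\alpha - e_i + e_j$ is negative then the corresponding coefficient is zero by convention, so the stated inequality reduces to $c_\alpha^2 \geq 0$, which holds since $N(f) \in \mathrm L_n^d$ forces $f \in \mathrm M_n^d$ and thus $c_\alpha \geq 0$. So I may assume $i \neq j$ and $\alpha_i, \alpha_j \geq 1$, and also $|\alpha| = d$ (else all coefficients in sight vanish). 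In particular $\mathbf d = \alpha - e_i - e_j \in \NN^n$ with $|\mathbf d| = d - 2$.

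By Lemma~\ref{lem:lorentzian-criterion}, the symmetric $n \times n$ matrix $H_{\mathbf d}$ with entries $H_{k\ell;\mathbf d} = c_{\mathbf d + e_k + e_\ell}$ has at most one positive eigenvalue. The key observation is that the principal $2\times 2$ submatrix of $H_{\mathbf d}$ indexed by $\{i, j\}$ works out to
\[
\begin{pmatrix} c_{\alpha + e_i - e_j} & c_\alpha \\ c_\alpha & c_{\alpha - e_i + e_j} \end{pmatrix},
\]
and Cauchy interlacing (applied to the embedding of this submatrix into $H_{\mathbf d}$) guarantees that this $2 \times 2$ submatrix inherits the property of having at most one positive eigenvalue.

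Finally I would combine this eigenvalue bound with the nonnegativity of the entries. Let $\lambda_1 \geq \lambda_2$ denote the eigenvalues of the $2 \times 2$ submatrix. The Lorentzian hypothesis gives $\lambda_2 \leq 0$, while the trace identity $\lambda_1 + \lambda_2 = c_{\alpha + e_i - e_j} + c_{\alpha - e_i + e_j} \geq 0$ forces $\lambda_1 \geq 0$. Hence the determinant $\lambda_1 \lambda_2$ is nonpositive, which rearranges to exactly $c_\alpha^2 \geq c_{\alpha + e_i - e_j}\, c_{\alpha - e_i + e_j}$. I do not expect any serious obstacle here; the only subtlety worth highlighting is that the condition "at most one positive eigenvalue" on its own is strictly weaker than "nonpositive determinant" for a symmetric $2 \times 2$ matrix, and it is precisely the nonnegativity of the diagonal entries, i.e.\ the $\mathrm M$-convex support condition built into the definition of Lorentzian, that bridges the gap.
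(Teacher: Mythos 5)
Your proof is correct, and it takes exactly the route the paper has in mind: the paper imports this statement from Br\"and\'en--Huh and merely remarks that it ``can be seen as a consequence of Cauchy's Interlacing Theorem,'' which is precisely what you flesh out via Lemma~\ref{lem:lorentzian-criterion} with $\mathbf d = \alpha - e_i - e_j$ and the $2\times 2$ principal submatrix indexed by $\{i,j\}$. Your handling of the degenerate cases and your closing remark that the nonnegativity of the diagonal entries (not just ``at most one positive eigenvalue'') is what yields the nonpositive determinant are both accurate.
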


This proposition can be seen  as a consequence of Cauchy's Interlacing Theorem. We recall below a special case of Cauchy's Interlacing Theorem, which we will use later.

\begin{proposition}[Cauchy's Interlacing Theorem, {\cite[Theorem 10.1.1]{parlett1998}}] 
\label{prop:cauchy-interlacing}
Let $A$ be a symmetric $n \times n$ matrix, and let $S\subseteq[n]$, and $m = |S|$. Let $B = A_S$ be thde $m\times m$ principal submatrix of $A$ given by $B = (a_{ij})_{i,j\in S}$. Let $\alpha_1 \leq \dots \leq \alpha_n$ be the eigenvalues of $A$ and let $\beta_1 \leq \dots \leq \beta_m$ be the eigenvalues of $B$. Then for every $j \in [m]$,
\[
\alpha_j \leq \beta_j \leq \alpha_{n-m+j}.
\]
In other words, the $j$th smallest eigenvalue of $A$ is at most the $j$th smallest eigenvalue of $B$, and the $j$th largest eigenvalue of $A$ is at least the $j$th largest eigenvalue of $B$.
\end{proposition}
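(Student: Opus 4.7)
The plan is to invoke the Courant–Fischer min–max characterization of eigenvalues of a symmetric matrix. For a symmetric $n \times n$ matrix $A$ with eigenvalues $\alpha_1 \leq \cdots \leq \alpha_n$, recall the two dual characterizations
\[
\alpha_j = \min_{\dim V = j} \max_{x \in V,\, \|x\|=1} x^T A x = \max_{\dim V = n-j+1} \min_{x \in V,\, \|x\|=1} x^T A x,
\]
where $V$ ranges over linear subspaces of $\RR^n$. The key structural observation I would make is that if $W = \mathrm{span}\{e_i : i \in S\} \subseteq \RR^n$, then for any $x \in W$ one has $x^T A x = x^T B x$, since only the entries $a_{ij}$ with $i,j \in S$ contribute to the quadratic form. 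Thus the analogous min–max expressions for $B$ are obtained from those for $A$ simply by restricting the outer optimization to subspaces $V \subseteq W$.

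With this observation in hand, both inequalities follow by comparing the two optimization problems. For the lower bound $\alpha_j \leq \beta_j$ I would use the first (min–max) form: the minimization defining $\alpha_j$ is over all $j$-dimensional subspaces of $\RR^n$, while the minimization defining $\beta_j$ is over the subfamily of those contained in $W$, and on this subfamily the inner maximum of $x^T A x$ and $x^T B x$ coincide; restricting a minimization to a smaller family of feasible subspaces can only increase its value. For the upper bound $\beta_j \leq \alpha_{n-m+j}$ I would use the second (max–min) form after noting the dimension identity $m - j + 1 = n - (n-m+j) + 1$: then $\beta_j$ is the maximum of the same functional taken over subspaces of $W$ of dimension $m - j + 1$, which is a subfamily of the subspaces of $\RR^n$ of the same dimension, so restricting a maximization to a smaller family can only decrease its value.

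Since the result is classical and the argument is essentially bookkeeping once Courant–Fischer is in place, I do not anticipate a substantive obstacle. The only care needed is to pair each inequality with the correct form of Courant–Fischer (min–max for the lower bound on $\beta_j$, max–min for the upper bound) and to keep the dimension counts consistent when identifying $n-m+j$ as the appropriate complementary index on the $A$ side.
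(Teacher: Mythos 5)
Your argument is correct: the min--max form of Courant--Fischer gives $\alpha_j \leq \beta_j$ by restricting the minimization to $j$-dimensional subspaces of $W = \mathrm{span}\{e_i : i \in S\}$, and the max--min form with the dimension count $m-j+1 = n-(n-m+j)+1$ gives $\beta_j \leq \alpha_{n-m+j}$, exactly as you describe. The paper does not prove this proposition at all --- it is quoted from Parlett's book --- and your Courant--Fischer argument is the standard textbook proof of Cauchy interlacing, so there is nothing to reconcile with the paper beyond noting that your proof is complete and correct.
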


We recall two important theorems about Lorentzian polynomials here:

\begin{theorem}[{\cite[Theorem 2.10]{bh2019}}]
\label{lem:f(Av)}
If $f\in\mathrm L_n^d$ is a Lorentzian polynomial in $n$ variables, and $A$ is an $n\times m$ matrix with nonnegative entries, then $f(A\mathbf v)\in \mathrm L_m^d$ is a Lorentzian polynomial in the $m$ variables $\mathbf v = (v_1, \dots, v_m)$.
\end{theorem}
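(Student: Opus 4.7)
The plan is a simultaneous induction on the degree $d$, handling the theorem alongside the auxiliary claim (\textbf{B$_d$}): \emph{if $h \in \mathrm L_n^d$ and $\mathbf c \in \RR_{\geq 0}^n$, then the directional derivative $D_{\mathbf c}h := \sum_i c_i\partial_{x_i}h$ lies in $\mathrm L_n^{d-1}$}. Both the theorem and (\textbf{B$_d$}) are trivial in degree $1$, and (\textbf{B$_2$}) is immediate because $D_{\mathbf c}h$ is then a linear polynomial with nonnegative coefficients.

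The base case $d=2$ of the theorem is a signature-preservation statement. Writing $f(\mathbf x) = \tfrac{1}{2}\mathbf x^T C \mathbf x$, the Hessian of $g(\mathbf v) := f(A\mathbf v)$ is $A^T C A$. Membership in $\mathrm L_n^2$ amounts to $C$ having at most one positive eigenvalue (alongside nonnegative entries and $M$-convex support, both immediate for $g$). The key observation is that congruence by any real matrix cannot increase positive inertia: if $A^T C A$ had two linearly independent positive eigenvectors $v_1, v_2$, then $A$ would necessarily be injective on $\mathrm{span}(v_1, v_2)$ (otherwise some nonzero $v$ in the span would give $(Av)^T C(Av) = 0$, contradicting positive-definiteness of $A^T C A$ on this span), so $A\cdot\mathrm{span}(v_1, v_2)$ would be a two-dimensional subspace on which $C$ is positive definite --- contradicting the hypothesis on $C$.

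For the inductive step at degree $d > 2$, I would first prove (\textbf{B$_d$}) using (\textbf{B$_{d-1}$}): given $h \in \mathrm L_n^d$ and $\mathbf c \geq 0$, set $k = D_{\mathbf c} h$. Nonnegative coefficients are immediate; $M$-convexity of $\mathrm{supp}(k) = \bigcup_{i\,:\,c_i > 0}\{\alpha - e_i : \alpha \in \mathrm{supp}(h),\,\alpha_i \geq 1\}$ is a standard closure property of $M$-convex sets (equivalently, of generalized permutahedra under the relevant polytopal operation). Since $\partial_{x_j} h \in \mathrm L_n^{d-1}$ and $\partial_{x_j} k = D_{\mathbf c}(\partial_{x_j} h)$, (\textbf{B$_{d-1}$}) yields $\partial_{x_j} k \in \mathrm L_n^{d-2}$ for each $j$, giving $k \in \mathrm L_n^{d-1}$. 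Then the theorem at degree $d$ follows: after verifying $g \in \mathrm M_m^d$ (nonnegative coefficients being immediate and $M$-convexity of $\mathrm{supp}(g)$ being a second closure property), the chain rule gives $\partial_{v_j} g = (D_{A e_j} f)(A\mathbf v)$, where $A e_j$ denotes the $j$th column of $A$. By (\textbf{B$_d$}), $D_{A e_j} f \in \mathrm L_n^{d-1}$; by the theorem at degree $d-1$, $(D_{A e_j} f)(A\mathbf v) \in \mathrm L_m^{d-1}$, closing the induction.

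The main obstacle I anticipate is verifying the $M$-convexity of the supports --- particularly $\mathrm{supp}(g)$, which is the union $\bigcup_{\alpha \in \mathrm{supp}(f)}\mathrm{supp}((A\mathbf v)^\alpha)$ of lattice points of Minkowski sums of coordinate simplices. This is a purely combinatorial statement that does not follow from the signature-preservation core of the argument but is essential for closing the induction; it should follow from the exchange-axiom characterization of $M$-convex sets, or equivalently from the known closure properties of generalized permutahedra under the relevant polytopal operations.
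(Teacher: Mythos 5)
First, a point of comparison: the paper does not prove this statement at all --- it is quoted verbatim from Br\"and\'en and Huh \cite{bh2019}*{Theorem 2.10} and used as a black box --- so your proposal can only be judged on its own terms. Its skeleton (induction on degree, the congruence/inertia argument $A^TCA$ for the quadratic base case, and the chain rule $\partial_{v_j}\, f(A\mathbf v) = (D_{A e_j}f)(A\mathbf v)$ to close the induction) is reasonable, and the $d=2$ base case and the step ``Theorem($d$) from (B$_d$) plus Theorem($d{-}1$)'' are fine as stated.

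The genuine gap is at (B$_3$). You derive (B$_d$) from (B$_{d-1}$) by checking that every coordinate partial of $k=D_{\mathbf c}h$ lies in $\mathrm L_n^{d-2}$ and that $k\in\mathrm M_n^{d-1}$; this invokes the recursive description of $\mathrm L_n^{d-1}$, which is valid only when $d-1>2$. When $d=3$, what must be shown is that the Hessian of the quadratic $D_{\mathbf c}h$, namely $\sum_i c_iH_i$ with $H_i$ the Hessian of $\partial_{x_i}h$, has at most one positive eigenvalue, and the fact that each $\partial_{x_j}k$ lies in $\mathrm L_n^1$ carries no spectral information whatsoever. This cannot be repaired formally: the class of symmetric matrices with nonnegative entries and at most one positive eigenvalue is not closed under nonnegative combinations (the Hessians of the Lorentzian quadratics $x_1x_2$ and $x_3x_4$ sum to a matrix with two positive eigenvalues), so the claim really uses that the $H_i$ are the Hessians of the partials of a single polynomial with M-convex support. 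Establishing exactly this is the substantive content of Br\"and\'en--Huh's theory (via G\r{a}rding's theory of hyperbolic polynomials, respectively their connectedness/Hodge--Riemann-type analysis of the cone of Lorentzian quadratics), so your induction rests on an unproved statement that is essentially equivalent to the hard part of the theorem. Separately, the two support statements you defer are also not routine closure properties: $\mathrm{supp}(f(A\mathbf v))$ and $\mathrm{supp}(D_{\mathbf c}h)$ are \emph{unions} of M-convex sets, and unions of M-convex sets are not M-convex in general; the facts are true but require Murota-type results on transformations of M-convex sets (this is what Br\"and\'en and Huh invoke), so they deserve more than an appeal to the exchange axiom. You flag the latter issue honestly, but the missing quadratic case (B$_3$) is an unacknowledged and essential gap.
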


\begin{theorem}[{\rm \cite[Theorem 9.1]{bh2019}}]
\label{lem:vol-is-lorentzian}
Let $K = (K_1, \dots, K_n)$ be convex bodies in $\RR^d$. The volume polynomial
\[
(w_1, \dots, w_n) \mapsto \vol(w_1K_1 + \dots + w_nK_n)
\]
is a Lorentzian polynomial.
\end{theorem}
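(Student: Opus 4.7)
The plan is to reduce the theorem to the classical Alexandrov--Fenchel inequality on mixed volumes. Writing $V(w) = \vol(w_1 K_1 + \cdots + w_n K_n)$, the Minkowski--Steiner expansion gives
\[
V(w) = \sum_{\alpha \in \NN^n,\ |\alpha| = d} \binom{d}{\alpha}\, V(K_1^{\alpha_1}, \ldots, K_n^{\alpha_n})\, w^\alpha,
\]
where $V(K_1^{\alpha_1}, \ldots, K_n^{\alpha_n})$ denotes the mixed volume with $K_i$ repeated $\alpha_i$ times. So $V$ is a homogeneous polynomial of degree $d$ in $w_1,\dots,w_n$, and its coefficients are nonnegative because mixed volumes of convex bodies are nonnegative.

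Next I would verify M-convexity of the support $S = \{\alpha : V(K_1^{\alpha_1},\dots,K_n^{\alpha_n}) > 0\}$. Here I would appeal to Minkowski's combinatorial characterization of positivity of mixed volumes: $V(C_1,\dots,C_d) > 0$ if and only if one can select segments $s_i \subseteq C_i$ whose direction vectors span $\RR^d$. Given $\alpha,\beta \in S$ with $\alpha_i > \beta_i$, I would fix positivity witnesses for $\alpha$ and $\beta$ and perform a matroid-style exchange between the two lists to locate an index $j$ with $\alpha_j < \beta_j$ for which both $\alpha - e_i + e_j$ and $\beta - e_j + e_i$ still admit spanning witnesses, and hence lie in $S$.

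The key step is the Hessian condition. Fix $\mathbf d \in \NN^n$ with $|\mathbf d|=d-2$, and let $L_1,\dots,L_{d-2}$ be the list in which each $K_k$ appears $d_k$ times. Then $\partial_{\mathbf d} V$ is a quadratic form in $w_1,\dots,w_n$ whose Hessian, up to a positive scalar factor, is the mixed-volume matrix
\[
M_{ij} = V(K_i, K_j, L_1, \ldots, L_{d-2}).
\]
The Alexandrov--Fenchel inequality, in its matrix form, asserts exactly that $M$ has at most one positive eigenvalue; combined with the recursive definition of Lorentzian polynomials (and the obvious analogue of Lemma~\ref{lem:lorentzian-criterion} for non-normalized polynomials) this yields the theorem.

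The main obstacle is of course the Alexandrov--Fenchel inequality itself, a deep classical result. The standard proofs proceed either by continuity and density reducing to strongly isomorphic polytopes and then invoking Alexandrov's maximum-principle argument on mixed discriminants, or by interpreting mixed volumes of polytopes as intersection numbers on projective toric varieties and applying the Hodge--Riemann relations. Assuming this classical input, the remaining steps above are essentially bookkeeping.
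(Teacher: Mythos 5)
The paper itself gives no proof of this statement: it is quoted directly from Br\"and\'en--Huh \cite{bh2019} (their Theorem 9.1), so there is no internal argument to compare yours against. Your outline is essentially the standard proof, and in substance the one in the cited source: expand $\vol(w_1K_1+\cdots+w_nK_n)$ into mixed volumes to see it is homogeneous of degree $d$ with nonnegative coefficients, verify $\mathrm{M}$-convexity of the support via the classical criterion for positivity of mixed volumes, and obtain the Hessian condition for each $\partial_{\mathbf d}$ from Alexandrov--Fenchel; assuming Alexandrov--Fenchel as classical input is exactly what the cited proof does, so that is not a gap. Two points deserve care. First, the pairwise scalar inequalities $V(K_i,K_j,L_1,\dots,L_{d-2})^2\geq V(K_i,K_i,L_1,\dots,L_{d-2})\,V(K_j,K_j,L_1,\dots,L_{d-2})$ for the fixed bodies alone do \emph{not} imply that the matrix $M_{ij}=V(K_i,K_j,L_1,\dots,L_{d-2})$ has at most one positive eigenvalue; what you need is either Alexandrov's signature (quadratic-form) formulation of the inequality, or to apply the scalar inequality to arbitrary nonnegative Minkowski combinations $\sum_i a_iK_i$ and $\sum_j b_jK_j$ and use multilinearity to get the reverse Cauchy--Schwarz inequality $(\mathbf a^{\mathsf T}M\mathbf b)^2\geq(\mathbf a^{\mathsf T}M\mathbf a)(\mathbf b^{\mathsf T}M\mathbf b)$ on the nonnegative orthant, from which the eigenvalue statement follows by a standard argument; your phrase that the matrix form is ``exactly'' Alexandrov--Fenchel is true as a known equivalence, but the equivalence itself needs this justification or a citation. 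Second, the $\mathrm{M}$-convexity step is correct but is cleanest via the observation that, by the segment criterion, the support is the set of integer points of a polymatroid (independent transversals), so the exchange property is inherited from matroid exchange rather than improvised; alternatively one can simply cite the characterization of vanishing of mixed volumes as in Schneider's book, which is what \cite{bh2019} does. Finally, be aware that the present paper rhetorically runs the implication in the opposite direction (Alexandrov--Fenchel as a corollary of the Lorentzian property), so in this context one should cite Alexandrov--Fenchel as external classical input, as you do, to avoid any appearance of circularity.
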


 \subsection{Flow polytopes.}  
	Recall the definition of flow polytopes in \eqref{def:fp}. We record several properties of them here which we will be using in later sections. 
	
	\begin{lemma}[\cite{schrijver2003}]
\label{lem:vertices-flow}
For any graph $G$ on the vertex set $[n+1]$, the vertices of the flow polytope $\mathcal F_G(e_1 - e_{n+1})$ are unit flows with support equal to {{\sf p}}, where {{\sf p}} is an increasing path from vertex $1$ to vertex $n+1$.
\end{lemma}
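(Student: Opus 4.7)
The plan is to establish both inclusions: that every unit flow supported on an increasing path from $1$ to $n+1$ is a vertex of $\mathcal{F}_G(e_1-e_{n+1})$, and conversely that every vertex has this form.

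For the first inclusion, I would fix an increasing path $\mathsf{p}$ from $1$ to $n+1$ and let $f = \chi_{\mathsf{p}}$ be the unit flow supported on $\mathsf{p}$ (so $f_e = 1$ for $e \in E(\mathsf{p})$ and $f_e = 0$ otherwise). Suppose $f = \lambda g + (1-\lambda) h$ with $g, h \in \mathcal{F}_G(e_1 - e_{n+1})$ and $0 < \lambda < 1$. Nonnegativity of $g$ and $h$ together with $f_e = 0$ for $e \notin E(\mathsf{p})$ forces both $g$ and $h$ to be supported on $E(\mathsf{p})$. Flow conservation at each internal vertex of $\mathsf{p}$, applied to the support restricted to a single path, then forces $g_e = h_e = 1$ for every $e \in E(\mathsf{p})$, so $g = h = f$ and $f$ is a vertex.

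For the second inclusion, let $f$ be a vertex of $\mathcal{F}_G(e_1-e_{n+1})$. Since $G$ is a DAG (edges point from smaller to larger vertices), I would invoke the standard path decomposition of nonnegative flows on DAGs to write $f = \sum_{i=1}^k c_i \chi_{\mathsf{p}_i}$, where each $\mathsf{p}_i$ is an increasing path from $1$ to $n+1$ and each $c_i > 0$. Since the total netflow at vertex $n+1$ is $-1$, we have $\sum_i c_i = 1$, so this realizes $f$ as a convex combination of the points $\chi_{\mathsf{p}_i} \in \mathcal{F}_G(e_1-e_{n+1})$. Because $f$ is a vertex, we must have $k=1$, whence $f = \chi_{\mathsf{p}_1}$ is a unit path flow, as desired.

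The main step to execute is the path decomposition in a DAG. This is the standard argument of iteratively locating an increasing path in the current support, subtracting off its bottleneck flow value, and recursing; it is recorded in Schrijver's reference. Acyclicity of $G$ is essential: it both guarantees that every edge in the support lies on some source-to-sink path and rules out cycle contributions that would otherwise obstruct a pure-path decomposition.
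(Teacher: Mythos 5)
Your proof is correct: the extreme-point argument for unit path flows plus the DAG path-decomposition of an arbitrary point is exactly the standard argument. The paper does not prove this lemma itself (it is quoted from Schrijver's book), and your argument matches the standard proof in that reference, so there is nothing to reconcile.
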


\begin{proposition}[{\cite[Section 3.4]{bv2008}}] \label{prop:flow-minkowski}
For nonnegative integers $a_1,\ldots,a_n$ and $G$ a graph on the vertex set $[n+1]$ we have that
\begin{equation} \label{eq:flow-minkowski}
\F_G(\a) = a_1 \F_G(e_1-e_{n+1}) + a_2 \F_G(e_2-e_{n+1}) + \cdots +
a_n \F_G(e_n-e_{n+1}).
\end{equation}
\end{proposition}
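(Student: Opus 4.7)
The plan is to verify the equality of two polytopes in $\R^{E(G)}$ by proving the two set inclusions separately.

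For the inclusion $(\supseteq)$, I would proceed by linearity of the incidence matrix $M_G$. Given $g_i \in \F_G(e_i - e_{n+1})$ for each $i\in[n]$, the combination $\sum_i a_i g_i$ has nonnegative entries since $a_i\geq 0$, and $M_G\bigl(\sum_i a_i g_i\bigr)=\sum_i a_i(e_i - e_{n+1})$. Because $G$ has a unique sink $n+1$, any admissible netflow vector $\mathbf a$ necessarily satisfies $a_{n+1}=-\sum_{i=1}^n a_i$, so the right-hand side equals $\mathbf a$, and hence $\sum_i a_i g_i \in \F_G(\mathbf a)$.

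For the reverse inclusion $(\subseteq)$, the workhorse is the classical flow decomposition theorem. Given any $f \in \F_G(\mathbf a)$, I would produce coefficients $f_P \geq 0$, indexed by increasing paths $P$ from some vertex $i\in[n]$ to $n+1$, such that $f=\sum_P f_P\,\chi_P$ and $\sum_{P\text{ starts at }i}f_P = a_i$, where $\chi_P$ denotes the characteristic vector of $P$ in $\R^{E(G)}$. For each $i$ with $a_i>0$, set
\[
g_i = \frac{1}{a_i}\sum_{P\text{ starts at }i} f_P\,\chi_P.
\]
By Lemma~\ref{lem:vertices-flow}, each $\chi_P$ (for $P$ starting at $i$) is a vertex of $\F_G(e_i-e_{n+1})$, so $g_i$ is a convex combination of vertices of this polytope and therefore lies in $\F_G(e_i-e_{n+1})$. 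For $i$ with $a_i = 0$, the corresponding Minkowski summand $a_i\,\F_G(e_i-e_{n+1})$ equals $\{0\}$ and contributes nothing. The identity $f = \sum_i a_i g_i$ then exhibits $f$ as an element of the Minkowski sum.

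The main obstacle, such as it is, lies in setting up the flow decomposition cleanly. Since $G$ is acyclic (edges are oriented from smaller to larger vertex) and has the unique sink $n+1$, the decomposition follows by routine induction: at each stage, pick a vertex $i\in[n]$ with positive remaining outflow, traverse an increasing path $P$ of positive-flow edges from $i$ to $n+1$, subtract $\varepsilon\cdot\chi_P$ with $\varepsilon = \min_{e\in P} f_e>0$, and repeat. The process terminates because each iteration zeroes out at least one edge, and acyclicity together with the unique sink ensures that an increasing path from any source to $n+1$ can always be extracted.
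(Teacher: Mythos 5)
The paper gives no proof of Proposition \ref{prop:flow-minkowski} at all: it is quoted from Baldoni--Vergne, so your proposal should be judged as a self-contained argument. Your overall strategy is the standard one and is sound: the inclusion $\supseteq$ follows from linearity of $M_G$ (note that the unique-sink hypothesis you invoke there is not part of the proposition and is not needed; all that is used is the convention $\a=(a_1,\dots,a_n,-\sum_{i=1}^n a_i)$), and the inclusion $\subseteq$ follows from a path decomposition of the flow grouped by initial vertex, where $g_i$ lies in $\F_G(e_i-e_{n+1})$ simply because it is a convex combination of unit path flows $\chi_P\in\F_G(e_i-e_{n+1})$ (you do not even need the vertex description of Lemma \ref{lem:vertices-flow} for this, only convexity).

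The one genuine flaw is in the decomposition subroutine, which is the entire content of the nontrivial inclusion. As written, you start at any vertex with positive remaining outflow and subtract $\varepsilon=\min_{e\in P}f_e$; this can destroy the two invariants your argument needs, namely that the paths starting at $i$ carry total weight exactly $a_i$ and that the residual vector is again a nonnegative flow with nonnegative netflow on $[n]$. Concretely, take $G$ the path $1\to2\to3$ with $\a=(1,1,-2)$ and the flow $f_{12}=1$, $f_{23}=2$: starting at vertex $2$ (which has positive outflow) and subtracting $\varepsilon=2$ along $2\to3$ assigns weight $2\neq a_2=1$ to paths starting at $2$, leaves a residual with netflow $-1$ at vertex $2$, and strands the next traversal, since from vertex $1$ you reach $2$ and find no positive-flow outgoing edge, hence no increasing path to $3$. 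The repair is routine: always start at a vertex $i\in[n]$ whose \emph{remaining excess} (netflow) is positive, and take $\varepsilon=\min\{\text{remaining excess at }i,\ \min_{e\in P}f_e\}$. Then the residual stays a nonnegative flow with netflow in $\R_{\ge0}^n\times\R_{\le0}$, so at every vertex of $[n]$ the outflow is at least the inflow and the greedy path extension to $n+1$ always succeeds; each step zeroes an edge or a source, so the process terminates after at most $|E(G)|+n$ steps; and the final residual is a nonnegative flow on an acyclic graph with zero netflow everywhere, hence zero. With this correction your proof is complete.
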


	  \begin{theorem}[Baldoni--Vergne volume formula, {\cite[Theorem 38]{bv2008}}]
\label{thm:baldoni-vergne} 
Let $G$ be a directed graph on the vertex set  $[n+1]$ with a unique sink, so that edges are oriented from a smaller vertex to a larger vertex. Let $\mathbf x = (x_1, \dots, x_n, -\sum_{i=1}^nx_i), x_i \in \ZZ_{\geq 0}$. Then
\[
\vol\,\mathcal F_G(\mathbf x) = \sum_{\mathbf j}K_G(j_1 - \textup{out}_1, \dots, j_n - \textup{out}_n, 0)\frac{\mathbf x^{\mathbf j}}{\mathbf j!},
\]
for $\textup{out}_i = \textup{outdeg}_i - 1$, where $\textup{outdeg}_i$ denotes the outdegree of vertex $i$ in $G$. The sum is over weak compositions $\mathbf j = (j_1, \dots, j_n)$ of $|E(G)| - n$ that dominate $(\textup{out}_1, \dots, \textup{out}_n)$, that is, for every $i\in[n]$ we have
\[
j_1 + \dots + j_i \geq \textup{out}_1 + \dots + \textup{out}_i.
\]
\end{theorem}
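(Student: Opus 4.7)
The plan is to decompose $\F_G(\mathbf x)$ as a Minkowski sum via Proposition~\ref{prop:flow-minkowski}, expand the volume as a mixed-volume polynomial, and identify the resulting coefficients with values of the Kostant partition function via an iterated residue. By Proposition~\ref{prop:flow-minkowski} we have $\F_G(\mathbf x) = \sum_{i=1}^n x_i\,\F_G(e_i - e_{n+1})$, so multilinearity of volume under Minkowski sums yields
\[
\vol\,\F_G(\mathbf x) = \sum_{|\mathbf j| = d} c_{\mathbf j}\, \frac{\mathbf x^{\mathbf j}}{\mathbf j!},
\]
where $d = |E(G)| - n$ equals $\dim\F_G(e_1 - e_{n+1})$ (following from a rank computation on the incidence matrix of $G$). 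The coefficient $c_{\mathbf j}$ is a normalized mixed volume, and the task reduces to identifying it with $K_G(j_1 - \textup{out}_1, \ldots, j_n - \textup{out}_n, 0)$, with the convention that $K_G$ vanishes outside the cone generated by $\{e_i - e_j : (i,j) \in E(G)\}$; this vanishing is precisely what will enforce the dominance condition on the sum.

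Next, I would pass to lattice-point counts. Since $|\F_G(\mathbf x)\cap\ZZ^{E(G)}| = K_G(\mathbf x_{\textup{ext}})$ with $\mathbf x_{\textup{ext}} = (\mathbf x, -\sum_i x_i)$, standard Ehrhart theory gives $\vol\,\F_G(\mathbf x) = \lim_{N \to \infty} N^{-d}\,K_G(N\mathbf x_{\textup{ext}})$, so it suffices to read off the degree-$d$ part of $K_G(N\mathbf x_{\textup{ext}})$ as a polynomial in $N$ on the generic chamber containing $\mathbf x_{\textup{ext}}$. For this I would invoke the generating-function identity
\[
\prod_{(i,j)\in E(G)} \frac{1}{1 - z_i z_j^{-1}} = \sum_{\mathbf a} K_G(\mathbf a)\,\mathbf z^{\mathbf a},
\]
which extracts $K_G(\mathbf a)$ as an iterated constant term in $z_1, \ldots, z_n$ (with $z_{n+1}$ eliminated by the balance condition $\sum a_i = 0$). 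Carrying out the iterated residue in the order $z_1, z_2, \ldots, z_n$ and isolating the $N^d$ coefficient organizes the leading term into a sum over weak compositions $\mathbf j$ of $d$. The partial-fraction expansion at $z_i$ contributes a shift of $\textup{outdeg}_i - 1 = \textup{out}_i$, so the surviving contribution at multidegree $\mathbf j$ is exactly $K_G(j_1 - \textup{out}_1, \ldots, j_n - \textup{out}_n, 0)$, with non-dominating $\mathbf j$ contributing zero because the argument then leaves the support cone of $K_G$.

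The main obstacle is the careful bookkeeping in the iterated residue: justifying that the residue/limit interchange correctly isolates the degree-$d$ piece of the quasipolynomial and verifying that the partial-fraction step at $z_i$ indeed produces the shift $\textup{out}_i$ together with the cumulative dominance inequality. A cleaner alternative route I would fall back on, if the residue bookkeeping becomes too heavy, is induction on $|E(G)|$ via deletion of an edge incident to $n+1$, combined with a convolution identity for $K_G$; the base case is a graph consisting of a single increasing path from $1$ to $n+1$, where both sides of the formula reduce to $\mathbf x^{\mathbf j}/\mathbf j!$ for a single weak composition $\mathbf j$.
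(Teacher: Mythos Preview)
The paper does not give its own proof of this statement: Theorem~\ref{thm:baldoni-vergne} is quoted as background from \cite[Theorem 38]{bv2008} and is used as a black box (specifically, in the proof of Theorem~\ref{thm:sigmaphi-sigmapsi-lorentzian}). There is therefore nothing in the paper to compare your attempt against.

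As for the sketch itself, your overall strategy---Minkowski decomposition via Proposition~\ref{prop:flow-minkowski}, expansion of $\vol$ as a homogeneous polynomial of degree $|E(G)|-n$, and identification of the coefficients through an iterated residue/constant-term extraction of the Kostant generating series---is indeed in the spirit of the Baldoni--Vergne argument. The parts you flag as ``bookkeeping'' are, however, the actual content of the theorem: showing that the iterated residue in the chosen order produces exactly the shift by $\textup{out}_i$ and the dominance restriction is not a triviality, and your proposal does not carry this out. The fallback induction on $|E(G)|$ you mention is also not obviously workable as stated, since deleting an edge incident to $n+1$ changes both the dimension $d$ and the outdegree vector in a way that does not immediately yield a clean recursion matching the claimed sum. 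So what you have is a plausible outline rather than a proof; but in any case no proof is expected here, since the paper simply cites the result.
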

In the above $\mathbf x^{\mathbf j}=\prod_{i=1}^n x_i^{j_i}$ and  $\mathbf j!=\prod_{i=1}^n j_i!$.

\section{Projections of flow polytopes onto generalized permutahedra}
\label{sec:projections-of-flow-polytopes}

In this section, we define the projections  $\varphi\colon\mathcal F_G(\mathbf a)\to\mathcal P(G;\mathbf a)$ and $\psi\colon\mathcal F_G(\mathbf a)\to\mathcal Q(G;\mathbf a)$, where $\mathcal P(G;\mathbf a)$ and  $\mathcal Q(G;\mathbf a)$ are $y$-generalized permutahedra   (see Propositions~\ref{prop:flow-project-gp} and~\ref{prop:gp-project-smallgp}). We study their  fibers in Section~\ref{subsec:the-fibers-of-our-projection}, leading us to  explicit expressions for  the polynomials $\sigma_{G(\mathbf a)}^\varphi$ and $\sigma_{G(\mathbf a)}^\psi$; see Corollary~\ref{cor:sigmaphi-sigmapsi-coeffs}. In Section \ref{sec:?} we use these expressions to prove Theorem \ref{thm:sigmaphi-sigmapsi-lorentzian}. 
\medskip

\noindent \textbf{Notational Conventions for Sections \ref{sec:projections-of-flow-polytopes}, \ref{subsec:the-fibers-of-our-projection} and \ref{sec:?}.} Unless specified otherwise, $G$ denotes a loopless directed graph on the vertex set  $[n+1]$ vertices  with a unique sink. Every edge of $G$ is oriented from its smaller vertex to its larger vertex. All flow polytopes $\mathcal F_G(\mathbf a)$ have netflow vector $\mathbf a \in \mathbb{Z}_{\geq 0}^n\times \mathbb{Z}_{\leq 0}$. For a finite set $S$, we denote by $\RR^S$ the real vector space consisting of $\RR$-linear combinations of elements in $S$; observe that for sets $S\subseteq T$, the vector space $\RR^S$ canonically embeds in $\RR^T$ as a coordinate hyperspace. We write $\RR^n$ to denote $\RR^{[n]}$.

\medskip

\begin{definition}
For $i,j \in V(G) = [n+1]$, we denote by $M(i,j)\in \NN_{\geq 0}$ the number of edges from $i$ to $j$, and by $\{(i,j;k)\}_{k \in [M(i,j)]}\subseteq E(G)$ the set of edges of $G$ connecting $i$ to $j$. 
\end{definition}

\begin{definition}[see Example~\ref{ex:STst}]
We denote by $S_G$ the set of all edges incident to the sink, that is,
\[
S_G \overset{\rm def}= \{e \in E(G)\colon e = (i,n+1;k) \text{ for some } i \in[n], k \in [M(i,n+1)]\}.
\]
For $i \in [n]$, let $S_{G,i}\subseteq S_G$ be the set of edges incident to $n+1$ which can be reached from vertex $i$, that is, if $\overline G$ denotes the transitive closure of $G$, then
\[
S_{G,i}\stackrel{\rm def}= \{e \in S_G\colon e = (j,n+1;k) \text{ and } (i,j) \in E(\overline G)\}.
\]
Denote by $T_G$ the set of all vertices incident to the sink, that is, 
\[
T_G \stackrel{\rm def}= \{i \in V(G) \colon M(i,n+1) \geq 1\}.
\]
For $i \in [n]$, let $T_{G,i}\subseteq T_G$ be the set of vertices adjacent to $n+1$ which can be reached from vertex $i$, that is,
\[
T_{G,i}\stackrel{\rm def}= \{j \in T_G\colon (i,j) \in E(\overline G)\}.
\]
\end{definition}

\begin{example}
\label{ex:STst}
Let $G$ be as in Figure~\ref{fig:STst}. The set $S_G\subseteq E(G)$ consists of the  blue edges, while $S_{G,2}$ consists of the four  blue edges emanating from vertices 2 and 4. If $G'$ denotes the graph obtained from $G$ by removing the edge $(2,3;1) \in E(G)$, then $S_{G',2}$ would only consist of the two  blue edges emanating from vertex 2.

The set $T_G\subseteq V(G) = [5]$ is equal to $\{1,2,4\}$, and $T_{G,3} = \{4\}$.
\begin{figure}[ht]
\begin{center}
\includegraphics[scale=0.6]{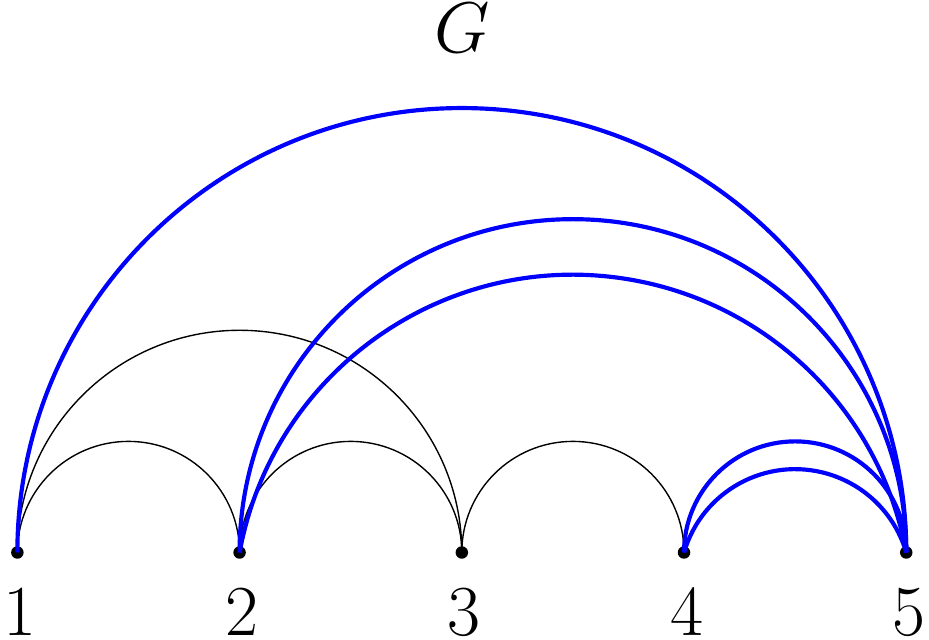}
\end{center}
\caption{A graph $G$ satisfying the conventions of this section, with edge orientations suppressed.}
\label{fig:STst}
\end{figure}
\end{example}

\begin{proposition}
\label{prop:flow-project-gp}
There is a projection
\[
\varphi\colon \mathcal F_G(\mathbf a)\twoheadrightarrow \mathcal P(G;\mathbf a),
\]
where $\mathcal P(G;\mathbf a)\subseteq\RR^{S_G}$ is the $y$-generalized permutahedron defined by
\[
\mathcal P(G;\mathbf a)\stackrel{\rm def}=\sum_{i\in[n]}a_i\Delta_{S_{G,i}}.
\] 
The map $\varphi$ is given by projecting a flow in $\mathcal F_G(\mathbf a)$ to the coordinates corresponding to edges in $S_G$.
\end{proposition}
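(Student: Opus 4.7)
The plan is to reduce the statement to understanding $\varphi$ on each summand of the Minkowski decomposition provided by Proposition~\ref{prop:flow-minkowski}, and then identify each projected summand with a coordinate simplex. I would first observe that coordinate projection is a linear map, and that linear maps commute with Minkowski sums. Applying Proposition~\ref{prop:flow-minkowski}, one has
\[
\varphi(\mathcal F_G(\mathbf a))=\sum_{i\in[n]}a_i\,\varphi(\mathcal F_G(e_i-e_{n+1})),
\]
so the proof reduces to showing $\varphi(\mathcal F_G(e_i-e_{n+1}))=\Delta_{S_{G,i}}$ for each $i\in[n]$.

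Next I would apply Lemma~\ref{lem:vertices-flow}: every vertex of $\mathcal F_G(e_i-e_{n+1})$ is the unit flow supported on an increasing path $\sf p$ from $i$ to $n+1$. Because $n+1$ is the unique sink and $\sf p$ is increasing and simple, exactly one edge of $\sf p$ lies in $S_G$, namely the last one, which has the form $(j,n+1;k)$ for some $j\in T_{G,i}$ (since $j$ is reached from $i$ along the initial portion of $\sf p$, we have $(i,j)\in E(\overline G)$, so $(j,n+1;k)\in S_{G,i}$). Therefore $\varphi$ sends this vertex to the standard basis vector $e_{(j,n+1;k)}\in \mathbb R^{S_G}$.

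The surjectivity in the other direction is equally straightforward: for any edge $(j,n+1;k)\in S_{G,i}$, by definition there is a directed path in $G$ from $i$ to $j$, which we may concatenate with the edge $(j,n+1;k)$ to produce an increasing path $\sf p$ from $i$ to $n+1$ whose unit flow lies in $\mathcal F_G(e_i-e_{n+1})$ and maps under $\varphi$ to $e_{(j,n+1;k)}$. Since $\varphi$ is linear and hence sends the convex hull of vertices to the convex hull of their images, we conclude
\[
\varphi(\mathcal F_G(e_i-e_{n+1}))=\conv\{e_e : e\in S_{G,i}\}=\Delta_{S_{G,i}},
\]
and summing with the weights $a_i$ completes the proof.

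There is no real obstacle here beyond being careful about two points: first, that $\mathbf a$ has the sign pattern $\mathbb Z_{\geq 0}^n\times\mathbb Z_{\leq 0}$ so the Minkowski-sum identity in Proposition~\ref{prop:flow-minkowski} applies with nonnegative scalars $a_i$; and second, that the unique-sink hypothesis on $G$ is what guarantees each increasing path ends at $n+1$ using exactly one edge from $S_G$, so that the projection of each path-vertex is a single standard basis vector rather than a sum of several.
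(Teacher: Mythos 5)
Your proposal is correct and follows essentially the same route as the paper: decompose via Proposition~\ref{prop:flow-minkowski}, use linearity of $\varphi$ through Minkowski sums, and identify $\varphi(\mathcal F_G(e_i-e_{n+1}))=\Delta_{S_{G,i}}$ by tracking where the path-vertices from Lemma~\ref{lem:vertices-flow} land. The only difference is that you spell out the surjectivity onto the vertices of $\Delta_{S_{G,i}}$ (concatenating a path from $i$ to $j$ with an edge $(j,n+1;k)$) explicitly, which the paper leaves implicit in the claim that the vertex sets coincide.
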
 
\begin{proof}
Proposition~\ref{prop:flow-minkowski} asserts that
\[
\mathcal F_G({\bf a}) = \sum_{i\in[n]}a_i\mathcal F_G(e_i - e_n).
\]
Because linear maps factor through Minkowski sums, we obtain
\[
\varphi(\mathcal F_G({\bf a})) = \sum_{i \in [n]}a_i\varphi(\mathcal F_G(e_i - e_{n+1})).
\]
Observe that $\varphi(\mathcal F_G(e_i - e_{n+1})) = \Delta_{S_{G,i}}$, because their vertex sets coincide: Lemma~\ref{lem:vertices-flow} asserts that the vertices of $\mathcal F_G(e_i - e_{n+1})$ are unit flows on paths $\mathsf p$ from $i$ to $n+1$; under $\varphi$, the vertex of $\mathcal F_G(e_i - e_{n+1})$ corresponding to $\mathsf p$ is mapped to the vertex of $\Delta_{S_{G,i}}$ corresponding to the (unique) edge in $\mathsf p$ that is incident to $n+1$. The claim $\varphi(\mathcal F_G(\mathbf a)) = \mathcal P(G;\mathbf a)$ follows.
\end{proof}

We note that a special case of Proposition~\ref{prop:flow-project-gp} was considered in~\cite[Section 4]{ms2017}.

\begin{definition}
\label{defn:ef}
For $i \in [n]$, let $I_i$ denote the set of $M(i,n+1)$ coordinates in $\mathbb R^{E(G)}$ corresponding to an edge connecting $i$ to $n+1$. For a flow  $x \in \mathcal F_G({\bf a})$, define the \textbf{escaping flow} vector $\ef x = (\ef x_1, \dots, \ef x_n) \in \mathbb R^n$ coordinatewise by
$$\ef x_i \overset{\rm def}= \sum_{j\in I_i}x_j.$$

For  $x\in\mathcal F_G(\mathbf a)$, and $\varphi$ as in Proposition \ref{prop:flow-project-gp}, define 
 $$\ef{\varphi(x)} \stackrel{\rm def}= \ef x.$$
\end{definition}

Note that if $i \not \in T_G$, or equivalently that if $I_i = \emptyset$, then $\ef x_i = 0$. Thus, we may regard $\ef x$ as a vector in $\RR^{T_G}$ (however, it will be useful to regard them as elements of $\RR^n$ whose coordinates indexed by $[n]\setminus T_G$ are zero).

Note also that $\ef x$ depends only on coordinates of $x\in\mathcal F_G(\mathbf a)\subseteq \RR^{E(G)}$ indexed by an edge $e \in S_G$. Hence $\ef{\varphi(x)} \stackrel{\rm def}= \ef x,$  is well defined since  $\varphi$ leaves the coordinates of $x$ corresponding to edges in $S_G$ unchanged.

\begin{proposition}
\label{prop:gp-project-smallgp}
There is a projection
\[
\psi\colon \mathcal F_G(\mathbf a) \twoheadrightarrow \mathcal Q(G;\mathbf a),
\]
where $\mathcal Q(G;\mathbf a)\subseteq \RR^{T_G}$ is the $y$-generalized permutahedron defined by
\[
\mathcal Q(G;\mathbf a)\stackrel{\rm def}= \sum_{i \in [n]}a_i\Delta_{T_{G,i}}.
\]
The map $\psi$ is given by sending $x\mapsto \ef x$. The map $\psi$ factors through $\varphi$, that is, the following diagram commutes:
\begin{center}\begin{tikzcd}
\mathcal F_G({\bf a}) \arrow[rr, "\varphi"] \arrow[rrrr, "\psi", bend right] &  & \mathcal P(G;{\bf a}) \arrow[rr, "\varphi(x)\mapsto\ef{\varphi(x)}"] &  & \mathcal Q(G;{\bf a})
\end{tikzcd}\end{center}
\end{proposition}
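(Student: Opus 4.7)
My plan is to mimic the proof of Proposition~\ref{prop:flow-project-gp}, since the map $\psi$ is constructed in a similar spirit and both targets are expressed as Minkowski sums of coordinate simplices. The first step is to observe that $\psi$ is a linear map from $\RR^{E(G)}$ to $\RR^{T_G}$: indeed, by Definition~\ref{defn:ef}, the $i$-th coordinate $\ef x_i = \sum_{j \in I_i} x_j$ is a linear combination (with $0/1$ coefficients) of the coordinates of $x$ indexed by edges in $S_G$ incident to vertex $i$. In particular, $\ef x$ depends only on the coordinates of $x$ indexed by $S_G$, so there is a linear projection $\pi\colon \RR^{S_G}\to\RR^{T_G}$ with $\psi = \pi\circ\varphi$, which immediately yields commutativity of the triangle in the statement.

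Next I would apply the Minkowski sum decomposition from Proposition~\ref{prop:flow-minkowski},
\[
\mathcal F_G(\mathbf a) = \sum_{i \in [n]} a_i\, \mathcal F_G(e_i - e_{n+1}),
\]
and use the fact that linear maps commute with Minkowski sums to reduce the problem to showing
\[
\psi\bigl(\mathcal F_G(e_i - e_{n+1})\bigr) = \Delta_{T_{G,i}}
\]
for each $i\in[n]$. This would give $\psi(\mathcal F_G(\mathbf a)) = \sum_i a_i\,\Delta_{T_{G,i}} = \mathcal Q(G;\mathbf a)$, establishing both that the image is correct and that $\mathcal Q(G;\mathbf a)$ is a $y$-generalized permutahedron.

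To establish the equality $\psi(\mathcal F_G(e_i - e_{n+1})) = \Delta_{T_{G,i}}$, I would pass to vertex sets, using linearity of $\psi$ together with Lemma~\ref{lem:vertices-flow}. That lemma tells us the vertices of $\mathcal F_G(e_i - e_{n+1})$ are unit flows along increasing paths $\mathsf p$ from $i$ to $n+1$. If such a path enters the sink via the vertex $j\in T_{G,i}$, then the unit flow routes exactly one unit of flow on the unique edge of $\mathsf p$ incident to $n+1$, which is an edge from $j$; thus $\ef x$ equals the standard basis vector $e_j \in \RR^{T_G}$, a vertex of $\Delta_{T_{G,i}}$. Conversely, every $j \in T_{G,i}$ is reachable from $i$ in $G$ and is adjacent to $n+1$, so there is an increasing path $i\to\cdots\to j \to n+1$, showing surjectivity onto the vertices of $\Delta_{T_{G,i}}$. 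Combined with convexity, this gives the desired equality.

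The only mild subtlety is the converse direction in the vertex computation, where one must invoke the definition of $T_{G,i}$ via the transitive closure $\overline G$ to guarantee existence of an increasing path $i\to j$ in $G$ (since $G$ is acyclic with edges pointing from smaller to larger vertices, reachability in $\overline G$ is equivalent to reachability by an increasing path in $G$). Once that is noted, the argument is otherwise parallel to Proposition~\ref{prop:flow-project-gp} and presents no real obstacle.
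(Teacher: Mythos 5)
Your proposal is correct and follows essentially the same route as the paper: reduce to $\psi(\mathcal F_G(e_i-e_{n+1}))=\Delta_{T_{G,i}}$ via the Minkowski decomposition of Proposition~\ref{prop:flow-minkowski}, compute images of vertices using Lemma~\ref{lem:vertices-flow}, and note that commutativity of the diagram amounts to the well-definedness of $\ef{\varphi(x)}$ discussed after Definition~\ref{defn:ef}. Your extra care with the converse direction (every $j\in T_{G,i}$ is hit, via reachability in $\overline G$) is a point the paper leaves implicit, but the argument is the same.
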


\begin{proof}
As in the proof of Proposition~\ref{prop:flow-project-gp}, it will suffice to show $\psi(\mathcal F_G(e_i - e_{n+1})) = \Delta_{T_{G,i}}$. Lemma~\ref{lem:vertices-flow} asserts that that the vertices of $\mathcal F_G(e_i - e_{n+1})$ are unit flows on paths $\mathsf p$ from $i$ to $n+1$; under $\psi$, the vertex of $\mathcal F_G(e_i - e_{n+1})$ corresponding to $\mathsf p$ is mapped to the vertex of $\Delta_{T_{G,i}}$ corresponding to the (unique) vertex $t$ of $G$ for which $\mathsf p$ contains an edge from $t$ to $n+1$.

That the diagram commutes is the statement that $\ef{\varphi(x)} \overset{\rm def}=\ef x$ is well defined, as discussed after Definition~\ref{defn:ef}.
\end{proof}

\section{The fibers of $\varphi$ and $\psi$}
\label{subsec:the-fibers-of-our-projection}

In order to study $\sigma_{G(\mathbf a)}^\varphi$ and $\sigma_{G(\mathbf a)}^\psi$ as defined in \eqref{def:1} and \eqref{def:2}, we  rewrite them as in equations \eqref{def:11} and \eqref{def:22} below; the validity of these equations follows from  Propositions \ref{prop:flow-project-gp} and \ref{prop:gp-project-smallgp}.    Equations \eqref{def:11} and \eqref{def:22} make it evident that in order to explicitly compute  the coefficients of the monomials appearing in $\sigma_{G(\mathbf a)}^\varphi$ and $\sigma_{G(\mathbf a)}^\psi$ (Corollary~\ref{cor:sigmaphi-sigmapsi-coeffs}) we need to compute the fibers of $\varphi$ and $\psi$, which is what we accomplish in  Theorem~\ref{thm:fibers-of-varphi} and Corollary~\ref{cor:fibers-of-psi}, respectively. 

 For brevity of notation, we index the coordinates of a point $\mathbf x \in \RR^{S_G}$ with $(i;k)$, which is shorthand for the edge $(i,n+1;k) \in S_G$. We define the polynomials
\begin{equation}\label{def:11}
\sigma_{G(\mathbf a)}^\varphi(x_{i;k})=\sum_{\mathbf p\in\mathcal P(G;\mathbf a)\cap\ZZ^{S_G}}(\varphi^{-1}(\mathbf p)\cap \ZZ^{E(G)})\mathbf x^{\mathbf p},
\end{equation}
and
\begin{equation}\label{def:22}
\sigma_{G(\mathbf a)}^\psi(x_i)=\sum_{\mathbf p\in\mathcal Q(G;\mathbf a)\cap\ZZ^{T_G}}(\psi^{-1}(\mathbf p)\cap\ZZ^{E(G)})\mathbf x^{\mathbf p}.
\end{equation}

\begin{theorem}
\label{thm:fibers-of-varphi}
Given a point $x \in \mathcal P(G;\mathbf a)$, the preimage $S_x\overset{\rm def}= \varphi^{-1}(x)\cap \mathcal F_G({\bf a})$ is a translation of the flow polytope $\mathcal F_G(a_1 - \ef x_1, \dots, a_n - \ef x_n,0)$. For $x \in \ZZ^{S_G}$, $S_x$ is integrally equivalent to $\mathcal F_G(a_1 - \ef x_1, \dots, a_n - \ef x_n,0)$.
\end{theorem}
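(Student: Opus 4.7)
The plan is to exhibit an explicit affine bijection realized by a translation. Let $\tilde x \in \RR^{E(G)}$ denote the extension of $x \in \RR^{S_G}$ by zero on the coordinates indexed by $E(G)\setminus S_G$, and consider the map $\tau\colon g\mapsto g+\tilde x$ on $\RR^{E(G)}$. I plan to show that $\tau$ restricts to a bijection $\mathcal F_G(a_1-\ef x_1,\dots,a_n-\ef x_n,0) \to S_x$. The integral equivalence for $x \in \ZZ^{S_G}$ will then be immediate, since in that case $\tilde x$ is an integer vector and translation by an integer vector preserves the ambient lattice.

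For the forward direction, let $g$ be a point of $\mathcal F_G(a_1-\ef x_1,\dots,a_n-\ef x_n,0)$. The key observation is that every edge of $S_G$ is directed into the sink $n+1$, and the required netflow of $g$ at $n+1$ is $0$; combined with $g\geq 0$ this forces $g_e=0$ for every $e\in S_G$. Consequently $\tau(g)_e = x_e$ on $S_G$ and $\tau(g)_e = g_e\geq 0$ elsewhere, so $\tau(g)\geq 0$ and $\varphi(\tau(g))=x$. At a vertex $i \in [n]$, the contribution of $\tilde x$ to the netflow is precisely $\ef x_i$ (namely the $S_G$-edges leaving $i$), so the netflow of $\tau(g)$ at $i$ is $(a_i-\ef x_i)+\ef x_i = a_i$. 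At vertex $n+1$, using $x \in \mathcal P(G;\mathbf a) = \sum_i a_i\Delta_{S_{G,i}}$ one reads off $\sum_i \ef x_i = \sum_i a_i = -a_{n+1}$, so the netflow of $\tau(g)$ at $n+1$ is $a_{n+1}$. Hence $\tau(g)\in S_x$.

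For the reverse direction, given $f\in S_x$ I would set $g = f-\tilde x$. Since $f_e = x_e = \tilde x_e$ on $S_G$ and $\tilde x_e=0$ elsewhere, $g$ has zeros on $S_G$ and the same nonnegative entries as $f$ on $E(G)\setminus S_G$, and reversing the netflow computation above places $g$ in $\mathcal F_G(a_1-\ef x_1,\dots,a_n-\ef x_n,0)$. Thus $\tau$ is the desired bijection.

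The main obstacle here is conceptually minor: the whole argument rests on the single observation that an edge terminating at the sink cannot carry positive flow when the sink has netflow zero. Once this is in hand, the remainder of the proof is routine bookkeeping of netflow conditions together with the straightforward verification that translation by an integer vector preserves the integer lattice.
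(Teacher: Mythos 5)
Your proof is correct and follows essentially the same route as the paper's: both realize the bijection as translation by the zero-extension $\tilde x$, with the target flows having netflow $0$ at the sink. If anything, you make explicit a point the paper leaves implicit—that nonnegative flows with zero netflow at the sink must vanish on all edges of $S_G$—which is exactly what guarantees the translation lands back in the fiber.
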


We emphasize that $\mathbf a = (a_1, \dots, a_n, -\sum_{i=1}^n a_i)$ with $a_i \geq 0$, and that for any $x\in\mathcal P(G;\mathbf a)$ we have 
\[
\sum_{i=1}^n \ef x_i = \sum_{i=1}^n\sum_{j\in I_i} x_j = \sum_{j\in S_G}x_j = \sum_{i=1}^na_i,
\]
with the second equality by the fact that $\bigsqcup_i I_i = S_G$ and the last equality by the definition of $\mathcal P(G;\mathbf a)$.

\begin{proof}
Let $\varphi^\perp\colon\RR^{E(G)}\to\RR^{E(G)}$ denote the projection sending components corresponding to edges in $S_G$ to zero. Note that $\varphi$ and $\varphi^\perp$ project $\RR^{E(G)}$ to orthogonal complements, so $\varphi^\perp$ is necessarily an injection from $S_x$ onto its image (since points in $S_x$ are all mapped to $x$ by $\varphi$). To clean up notation, we will write $z_i = a_i - \ef x_i$.

Restricting an $\mathbf a$-flow in $S_x$ onto just the edges in $G|_{[n]}$ gives a (nonnegative) flow with netflow precisely $a_i - \ef x_i$ on vertex $i$. Hence, $\varphi^\perp$ is a map $S_x\hookrightarrow\mathcal F_G(z_1, \dots, z_n,0)$; furthermore, the inverse $\mathcal F_G(z_1, \dots, z_n,0) \to S_x$ is translation by
\[
\tilde x = (\tilde x_e)_{e\in E(G)}\in \RR^{E(G)}; \hspace{1cm} \tilde x_e \stackrel{\rm def}= \begin{cases} x_e&\text{ if } e \in S_G\\ 0 &\text{otherwise}\end{cases}
\]
Hence, $S_x$ is equal to $\mathcal F_G(z_1, \dots, z_n, 0)$ up to translation by $\tilde x$. Furthermore, if $x \in \ZZ^{S_G}$, then translation by $\tilde x \in \ZZ^{E(G)}$ is an integral equivalence $S_x\equiv\mathcal F_G(z_1, \dots, z_n,0)$.
\end{proof}

\begin{corollary}
\label{cor:fibers-of-psi}
Given a point $x \in \mathcal Q(G;{\bf a})$, the preimage $T_x\stackrel{\rm def}= \psi^{-1}(x)\cap \mathcal F_G({\bf a})$ is equal to $\mathcal F_G(a_1 - x_1, \dots, a_n - x_n,0)\times\prod_{i\in T_G}x_i\Delta_{I_i}$.
\end{corollary}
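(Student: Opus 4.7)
The plan is to leverage the factorization $\psi=\varphi'\circ\varphi$ from Proposition~\ref{prop:gp-project-smallgp}, where $\varphi'\colon\mathcal P(G;\mathbf a)\to\mathcal Q(G;\mathbf a)$ is the map $\varphi(z)\mapsto\ef{\varphi(z)}$, and combine it with Theorem~\ref{thm:fibers-of-varphi}. The proof then splits cleanly into identifying the fiber of $\varphi'$ over $x$ and then the fibers of $\varphi$ over that first fiber.

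First, I would describe $(\varphi')^{-1}(x)\subseteq\mathcal P(G;\mathbf a)$. Since $\varphi'$ sends $y\in\mathcal P(G;\mathbf a)\subseteq\RR^{S_G}$ to the vector $\bigl(\sum_{j\in I_i}y_j\bigr)_{i\in[n]}$, and the sets $I_i$ partition $S_G$, the equation $\varphi'(y)=x$ amounts to independently choosing, for each $i\in T_G$, a nonnegative vector on $I_i$ summing to $x_i$. Hence $(\varphi')^{-1}(x)=\prod_{i\in T_G}x_i\Delta_{I_i}$.

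Second, I would apply Theorem~\ref{thm:fibers-of-varphi} to describe $\varphi^{-1}(y)\cap\mathcal F_G(\mathbf a)$ for each $y\in(\varphi')^{-1}(x)$. Every such $y$ satisfies $\ef{y}=x$, so the theorem guarantees that $\varphi^{-1}(y)$ is a translate of the \emph{same} polytope $\mathcal F_G(a_1-x_1,\ldots,a_n-x_n,0)$, with translation vector precisely the lift $\tilde y\in\RR^{E(G)}$ supported on $S_G$ and agreeing with $y$ there. Crucially, $\mathcal F_G(a_1-x_1,\ldots,a_n-x_n,0)$ is effectively supported on $E(G)\setminus S_G$: the netflow condition $0$ at the sink, combined with the fact that $S_G$-edges are only inbound to $n+1$ and carry nonnegative flow, forces all $S_G$-coordinates to vanish. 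Thus the two pieces live on complementary coordinates in the decomposition $\RR^{E(G)}=\RR^{E(G)\setminus S_G}\oplus\RR^{S_G}$.

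Taking the union of $\varphi^{-1}(y)$ as $y$ ranges over $(\varphi')^{-1}(x)$, the $E(G)\setminus S_G$-coordinates of points in $T_x$ sweep out $\mathcal F_G(a_1-x_1,\ldots,a_n-x_n,0)$ independently of the $S_G$-coordinates, which sweep out $\prod_{i\in T_G}x_i\Delta_{I_i}$. This is exactly the claimed product decomposition. Because Theorem~\ref{thm:fibers-of-varphi} and Proposition~\ref{prop:gp-project-smallgp} do the heavy lifting, I do not anticipate any serious obstacle; the argument is essentially bookkeeping, and the only subtlety worth spelling out is why $\mathcal F_G(a_1-x_1,\ldots,a_n-x_n,0)$ embeds into $\RR^{E(G)}$ consistently with the product structure.
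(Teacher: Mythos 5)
Your slicing of $T_x$ along the $S_G$-coordinates via $\varphi$ is a reasonable route and is essentially dual to the paper's proof: the paper instead projects $T_x$ by the map $\varphi^\perp$ killing the $S_G$-coordinates, lands in $\mathcal F_G(a_1-x_1,\dots,a_n-x_n,0)$, and observes that the fiber over each point $p$ there is $\{p\}\times\prod_{i\in T_G}x_i\Delta_{I_i}$. Your observation that $\mathcal F_G(a_1-x_1,\dots,a_n-x_n,0)$ has vanishing $S_G$-coordinates (netflow $0$ at the sink plus nonnegativity of flows) is correct and is exactly what makes the product structure compatible with the decomposition $\RR^{E(G)}=\RR^{E(G)\setminus S_G}\oplus\RR^{S_G}$.

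There is, however, one step you assert without justification, and it carries the ``onto the product'' direction of the equality: the identification $(\varphi')^{-1}(x)=\prod_{i\in T_G}x_i\Delta_{I_i}$. Your computation only shows that this product is the preimage of $x$ inside $\RR^{S_G}_{\geq 0}$, whereas $\varphi'$ is a map defined on $\mathcal P(G;\mathbf a)$, and Theorem~\ref{thm:fibers-of-varphi} applies only to points of $\mathcal P(G;\mathbf a)$. If some $y$ in the simplex product failed to lie in $\mathcal P(G;\mathbf a)=\varphi(\mathcal F_G(\mathbf a))$, its $\varphi$-fiber would be empty and your union would yield only the containment $T_x\subseteq\mathcal F_G(a_1-x_1,\dots,a_n-x_n,0)\times\prod_{i\in T_G}x_i\Delta_{I_i}$, not equality. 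The gap is easy to close in either of two ways. One is the direct gluing check, which is precisely what the paper's proof does: for any $p\in\mathcal F_G(a_1-x_1,\dots,a_n-x_n,0)$ and any $y\in\prod_{i\in T_G}x_i\Delta_{I_i}$, the point $p+\tilde y$ (with $\tilde y$ the lift of $y$ supported on $S_G$) has netflow $(a_i-x_i)+x_i=a_i$ at each $i\in[n]$ and $-\sum_i a_i$ at the sink, hence lies in $\mathcal F_G(\mathbf a)$ and maps to $x$ under $\psi$. The other is to verify $\prod_{i\in T_G}x_i\Delta_{I_i}\subseteq\mathcal P(G;\mathbf a)$ directly: write $x=\sum_i a_i t^{(i)}$ with $t^{(i)}\in\Delta_{T_{G,i}}$ and $y=\sum_k x_k\delta^{(k)}$ with $\delta^{(k)}\in\Delta_{I_k}$, so that $y=\sum_i a_i\bigl(\sum_k t^{(i)}_k\delta^{(k)}\bigr)\in\sum_i a_i\Delta_{S_{G,i}}$, using $I_k\subseteq S_{G,i}$ whenever $k\in T_{G,i}$. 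With either addition your argument is complete.
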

\begin{proof}
Observe that $\varphi^\perp(T_x) = \mathcal F_G(a_1 - x_1, \dots, a_n - x_n,0)$, since an $\mathbf a$-flow in $T_x$ restricted onto just the edges in $G|_{[n]}$ gives a flow with netflow precisely $a_i - x_i$ on vertex $i$. The fiber $\psi^{-1}(p)\cap \mathcal F_G(\mathbf a)$ of any point $p \in \mathcal F_G(a_1 - x_1, \dots, a_n - x_n, 0)$ is equal to $\{p\}\times\prod_{i\in T_G}x_i\Delta_{I_i}$. The claim follows.
\end{proof}

\begin{corollary}
\label{cor:sigmaphi-sigmapsi-coeffs}
We have
\[
\sigma_{G(\mathbf a)}^\varphi(x_{i;k}) =\sum_{\mathbf p\in\mathcal P(G;\mathbf a)\cap\ZZ^{S_G}}K_G(a_1 - \ef{\mathbf p}_1, \dots, a_n - \ef{\mathbf p}_n, 0)\mathbf x^{\mathbf p},
\]
and
\[
\sigma_{G(\mathbf a)}^\psi(x_i) =\sum_{\mathbf p\in\mathcal Q(G;\mathbf a)\cap\ZZ^{T_G}}K_G(a_1 -\ef{\mathbf p}_1, \dots, a_n - \ef{\mathbf p}_n)\binom{|I_1|+x_1 - 1}{x_1}\dots\binom{|I_n|+x_n-1}{x_n}\mathbf x^{\mathbf p}.
\]
\end{corollary}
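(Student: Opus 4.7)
The plan is to observe that the corollary follows almost immediately by unpacking the definitions~\eqref{def:11} and~\eqref{def:22}, once the fiber descriptions in Theorem~\ref{thm:fibers-of-varphi} and Corollary~\ref{cor:fibers-of-psi} are in hand. Indeed, both $\sigma^\varphi_{G(\mathbf a)}$ and $\sigma^\psi_{G(\mathbf a)}$ have coefficients given by the number of lattice points in the fiber over $\mathbf p$, so the task reduces to counting lattice points in the explicit polytopal descriptions of those fibers.

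For the $\varphi$ formula, I would cite Theorem~\ref{thm:fibers-of-varphi}: for integer $\mathbf p \in \mathcal P(G;\mathbf a)\cap\ZZ^{S_G}$, the fiber $\varphi^{-1}(\mathbf p)\cap\mathcal F_G(\mathbf a)$ is integrally equivalent (via translation by $\tilde{\mathbf p}$) to the flow polytope $\mathcal F_G(a_1-\ef{\mathbf p}_1,\dots,a_n-\ef{\mathbf p}_n,0)$. The number of lattice points of this flow polytope is $K_G(a_1-\ef{\mathbf p}_1,\dots,a_n-\ef{\mathbf p}_n,0)$ by the definition of the Kostant partition function recalled just after~\eqref{def:fp}. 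Substituting into~\eqref{def:11} yields the first claim.

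For the $\psi$ formula, I would use Corollary~\ref{cor:fibers-of-psi}: the fiber $\psi^{-1}(\mathbf p)\cap\mathcal F_G(\mathbf a)$ is the product polytope $\mathcal F_G(a_1-p_1,\dots,a_n-p_n,0)\times\prod_{i\in T_G}p_i\Delta_{I_i}$. Since lattice points of a product factor multiplicatively, the flow polytope factor again contributes $K_G(a_1-p_1,\dots,a_n-p_n,0)$, while each simplex factor $p_i\Delta_{I_i}$ has integer points in bijection with weak compositions of $p_i$ into $|I_i|$ parts, contributing $\binom{|I_i|+p_i-1}{p_i}$. For $i\notin T_G$, one has $p_i=0$ (since $\mathbf p\in\RR^{T_G}$) and $|I_i|=0$, so the convention $\binom{-1}{0}=1$ lets us extend the product over all $i\in[n]$ without changing its value, matching the formula as written. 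Substitution into~\eqref{def:22} completes the proof.

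There is no genuine obstacle here: the heavy lifting has already been done in Theorem~\ref{thm:fibers-of-varphi} and Corollary~\ref{cor:fibers-of-psi}. The only minor subtlety is the notational point that $\ef{\mathbf p}_i=p_i$ for $\mathbf p\in\mathcal Q(G;\mathbf a)$, so the notation in the $\psi$-formula is consistent once one recalls that $\psi(x)=\ef x$.
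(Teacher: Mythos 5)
Your proposal is correct and follows essentially the same route as the paper, whose proof likewise just combines Theorem~\ref{thm:fibers-of-varphi} and Corollary~\ref{cor:fibers-of-psi} with the fact that the lattice point count of a flow polytope is the Kostant partition function. Your explicit stars-and-bars count $\binom{|I_i|+p_i-1}{p_i}$ for the simplex factors and the remark that $\ef{\mathbf p}_i = p_i$ on $\mathcal Q(G;\mathbf a)$ simply spell out steps the paper leaves implicit.
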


\begin{proof}
The number of integer points of $\mathcal F_G(\mathbf a)$ is given by the Kostant partition function $K_G(\mathbf a)$. Combining this fact with Theorem~\ref{thm:fibers-of-varphi} and Corollary~\ref{cor:fibers-of-psi} gives the desired result.
\end{proof}

\section{Normalized projections of integer point transforms are Lorentzian}
\label{sec:?}
In this section, we show that $N(\sigma_{G(\mathbf a)}^\varphi)$ and $N(\sigma_{G(\mathbf a)}^\psi)$ are Lorentzian; see Theorem~\ref{thm:sigmaphi-sigmapsi-lorentzian}. In order to prove this we begin with a series of reductions (Proposition~\ref{prop:varphi-implies-psi} and Lemma~\ref{lem:G^--implies-G}). Then, a combinatorial symmetry (Lemma~\ref{lem:graph-flip}) allows us to realize Hessians   of repeated partial derivatives of $\sigma_{G(\mathbf a)}^\varphi$ as Hessians of repeated partial derivatives of volume polynomials.

 We begin by formally stating the main result of this section. 
 
 \begin{theorem}
\label{thm:sigmaphi-sigmapsi-lorentzian}
The polynomials $(N(\sigma_{G({\bf a})}^{\varphi}))(x_{i;k})$ and $(N(\sigma_{G({\bf a})}^{\psi}))(x_i)$ are Lorentzian. %In particular, their coefficients are log-concave in root directions.
\end{theorem}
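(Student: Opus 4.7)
The plan is to combine the explicit expressions for the coefficients of $\sigma^\varphi_{G(\mathbf a)}$ and $\sigma^\psi_{G(\mathbf a)}$ from Corollary~\ref{cor:sigmaphi-sigmapsi-coeffs} with the Baldoni--Vergne volume formula (Theorem~\ref{thm:baldoni-vergne}), which presents Kostant partition function values as the normalized coefficients of a volume polynomial of a flow polytope. Because volume polynomials are Lorentzian (Theorem~\ref{lem:vol-is-lorentzian}), the Hessians of their iterated partial derivatives have at most one positive eigenvalue, and by Cauchy interlacing (Proposition~\ref{prop:cauchy-interlacing}) this property is inherited by any principal submatrix. The goal is therefore to realize each matrix $H_{\mathbf d}$ appearing in the Lorentzian criterion (Lemma~\ref{lem:lorentzian-criterion}) applied to $N(\sigma^\varphi_{G(\mathbf a)})$ as a principal submatrix of such a Hessian. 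The M-convex support hypothesis will come for free from Propositions~\ref{prop:flow-project-gp} and~\ref{prop:gp-project-smallgp}, which identify the supports of $\sigma^\varphi_{G(\mathbf a)}$ and $\sigma^\psi_{G(\mathbf a)}$ with lattice points of $y$-generalized permutahedra.

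First I would dispose of the $\psi$ case by reducing it to the $\varphi$ case (this should be Proposition~\ref{prop:varphi-implies-psi}). By Corollary~\ref{cor:sigmaphi-sigmapsi-coeffs}, the coefficients of $\sigma^\psi_{G(\mathbf a)}$ differ from those of $\sigma^\varphi_{G(\mathbf a)}$ only by the multinomial-type weights $\prod_i\binom{|I_i|+x_i-1}{x_i}$, which count distributions of an escaping flow $x_i$ among the $|I_i|$ parallel edges from vertex $i$ to the sink. This suggests identifying $\sigma^\psi_{G(\mathbf a)}(x_i)$ with $\sigma^\varphi_{G'(\mathbf a)}(x_{i;k})$ evaluated along the diagonal substitution $x_{i;k} := x_i$, for a suitable modification $G'$ of $G$; the Lorentzian conclusion for $G'$ then transports to $G$ by Theorem~\ref{lem:f(Av)}. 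A complementary graph reduction (Lemma~\ref{lem:G^--implies-G}) should further let us assume that $G$ has a convenient normal form --- plausibly that every vertex of $[n]$ is directly adjacent to the sink --- which is the setting in which Baldoni--Vergne produces precisely the Kostant partition function values we need.

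With these reductions in place, the entries of $H_{\mathbf d}$ are Kostant partition function values of $G$ evaluated on netflow vectors determined by $\mathbf a$, $\mathbf d$, and the pair of sink-edge indices $(i;k),(j;\ell)$ labelling the matrix entry. The combinatorial symmetry provided by Lemma~\ref{lem:graph-flip} --- whose expected form is an orientation-reversal that swaps the roles of source and sink --- should identify these values with the coefficients appearing in the Baldoni--Vergne expansion of a volume polynomial on the flipped graph $\widetilde G$. After matching variables, $H_{\mathbf d}$ will be recognized as a principal submatrix of the Hessian of an iterated partial derivative of this volume polynomial, and Theorem~\ref{lem:vol-is-lorentzian} combined with Cauchy interlacing finishes the argument.

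The main obstacle I anticipate is this last matching step: pinpointing the correct graph flip, aligning the sink-edge coordinates $x_{i;k}$ used by $\sigma^\varphi_{G(\mathbf a)}$ with the vertex-indexed variables of Baldoni--Vergne, and verifying the resulting Kostant partition function identity. In particular, the hypothesis $\mathbf a \in \mathbb Z_{\geq 0}^n \times \mathbb Z_{\leq 0}$ should enter precisely here, to guarantee that the flipped netflow vector is still compatible with a single-sink graph so that Baldoni--Vergne applies. Once this bookkeeping is under control, Lemma~\ref{lem:lorentzian-criterion} together with Cauchy interlacing produces the conclusion essentially mechanically.
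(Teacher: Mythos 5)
Your overall architecture does track the paper's (reduce $\psi$ to $\varphi$, normalize the sink structure of $G$, then combine the graph flip with Baldoni--Vergne and the Lorentzian property of volume polynomials), but your $\psi$-to-$\varphi$ reduction as described would fail. At the level of \emph{un-normalized} polynomials the diagonal substitution $x_{i;k}:=x_i$ already works with $G'=G$: writing $A$ for the $0/1$ matrix realizing this substitution, one has $\sigma_{G(\mathbf a)}^\psi(\mathbf x)=\sigma_{G(\mathbf a)}^\varphi(A\mathbf x)$, since $\psi$ factors through $\varphi$ by summing over parallel sink edges. The problem is that Theorem~\ref{lem:f(Av)} applies to the \emph{normalized} polynomial, and normalization does not commute with a many-to-one substitution. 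Concretely, the coefficient of $\mathbf x^{\mathbf q}$ in $N(\sigma_{G(\mathbf a)}^\varphi)(A\mathbf x)$ is $K_{G|_{[n]}}(\mathbf a|_{[n]}-\mathbf q)\prod_i |I_i|^{q_i}/q_i!$ (multinomial theorem), whereas the coefficient of $\mathbf x^{\mathbf q}$ in $N(\sigma_{G(\mathbf a)}^\psi)$ is $K_{G|_{[n]}}(\mathbf a|_{[n]}-\mathbf q)\prod_i \binom{|I_i|+q_i-1}{q_i}/q_i!$; already for two parallel sink edges and $q_i=2$ these differ ($4$ versus $3$). So Theorem~\ref{lem:f(Av)} proves the wrong polynomial Lorentzian. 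No choice of ``suitable modification $G'$'' rescues this mechanism: if the substitution is genuinely many-to-one the same non-commutation occurs, and if it is a bijection on variables then Theorem~\ref{lem:f(Av)} is vacuous and all the content lies in constructing $G'$. That construction is exactly what the paper supplies: the extension graph $G^{\textup{ex}}$, which inserts a new vertex $i^{\textup{ex}}$ behind each bundle of parallel sink edges so that $\sigma_{G(\mathbf a)}^\psi$ literally equals $\sigma_{G^{\textup{ex}}(\mathbf a^{\textup{ex}})}^\varphi$ after a mere renaming of variables (Lemmas~\ref{lem:supp-iso}--\ref{lem:sigmapsi-sigma-gex-phi}), so no substitution theorem is needed.

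Two further points where your sketch diverges from what is actually required. The normal form behind Lemma~\ref{lem:G^--implies-G} is not ``every vertex adjacent to the sink'' but ``at most one edge from each vertex to the sink'' ($G^-$), and the eigenvalue transfer there runs from the small matrix to the big one: $K_{\mathbf d}$ for $G$ is obtained from $K_{\ef{\mathbf d}}^-$ by duplicating rows and columns, so one cannot simply quote that principal submatrices inherit ``at most one positive eigenvalue''; the paper combines interlacing with the fact that duplication preserves rank (at least $r-1$ negative eigenvalues pass upward, and rank $r$ then forces at most one positive one). Finally, the matching step you correctly flag as the main obstacle is where the real work lies: $G^r$ with netflow $\mathbf z$ (which has negative entries summing to $-2$) is not in Baldoni--Vergne form, so the paper augments it to a graph $\widetilde G$ with $-z_{\min}$ extra vertices and all edges into them, applies Theorem~\ref{thm:baldoni-vergne}, differentiates by $\widetilde{\mathbf z}+\widetilde{\mathbf o}$, and only then uses Theorem~\ref{lem:f(Av)} with a $0/1$ diagonal matrix to land exactly on $\widetilde K_{\ef{\mathbf d}}^-$ (Proposition~\ref{prop:entrywise-formula}). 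As written, your proposal leaves this construction undone and its $\psi$ half rests on a step that fails.
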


As $N(\sigma_{G(\mathbf a)}^\varphi)$ is Lorentzian, the coefficients of $\sigma_{G(\mathbf a)}^\varphi$ satisfy a log-concavity inequality (see Proposition~\ref{lem:log-concavity-of-coeffs}), which is equivalent to: 

\begin{corollary} [cf.\ {\cite[Proposition 11]{hmms2019}}]
\label{cor:kostant-log-concave}
For any directed graph $G$ on the vertex set  $[n]$  and for any $\mathbf v  \in \ZZ^n$ we have:
\[
K_G(\mathbf v)^2 \geq K_G(\mathbf v + e_i - e_j)K_G(\mathbf v - e_i + e_j)
\]
for every $i,j \in [n]$.
\end{corollary}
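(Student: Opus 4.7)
The plan is to derive the claimed log-concavity inequality as a direct consequence of Proposition~\ref{lem:log-concavity-of-coeffs} applied to $\sigma_{\tilde G(\tilde{\mathbf a})}^\varphi$ for a cleverly chosen auxiliary graph $\tilde G$ and netflow $\tilde{\mathbf a}$. First I would dispose of the trivial case: unless $\sum_k v_k = 0$, all three Kostant values vanish and the claim reduces to $0 \geq 0$, so from now on assume $\sum_k v_k = 0$.

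Given the graph $G$ on $[n]$, I would build $\tilde G$ on $[n+1]$ by adjoining a new vertex $n+1$ together with one new edge $(k,n+1;1)$ from every $k \in [n]$ to $n+1$. Then $\tilde G$ is loopless, its edges are oriented from smaller to larger, and $n+1$ is its unique sink, so $\tilde G$ meets the hypotheses of Theorem~\ref{thm:sigmaphi-sigmapsi-lorentzian}. The edge set $S_{\tilde G}$ has exactly $n$ elements, which I identify with $[n]$ via $(k,n+1;1)\leftrightarrow k$; under this identification the escaping flow is simply $\ef{\mathbf p}_k = p_k$. Because $n+1$ has no outgoing edges, any nonnegative flow on $\tilde G$ with zero netflow at $n+1$ must carry zero flow on every edge in $S_{\tilde G}$, which yields the identity $K_{\tilde G}(u_1, \dots, u_n, 0) = K_G(u_1, \dots, u_n)$ for all $\mathbf u \in \ZZ^n$.

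Now pick $\mathbf a \in \NN^n$ with $a_k$ large enough that $a_k > v_k + 1$ for every $k$, and set $\tilde{\mathbf a} = (a_1, \dots, a_n, -\sum_k a_k) \in \NN^n\times\ZZ_{\leq 0}$. By Corollary~\ref{cor:sigmaphi-sigmapsi-coeffs} combined with the $K_{\tilde G}\leftrightarrow K_G$ identification above, the coefficient of $\mathbf x^\beta$ in $\sigma_{\tilde G(\tilde{\mathbf a})}^\varphi$ is $K_G(\mathbf a - \beta)$ for every $\beta \in \NN^n$ (extending from $\mathcal P(\tilde G;\tilde{\mathbf a})\cap\ZZ^{S_{\tilde G}}$ to all of $\NN^n$ with the usual convention that $K_G$ vanishes on infeasible netflows). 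Setting $\alpha := \mathbf a - \mathbf v \in \NN^n$, the largeness of $\mathbf a$ ensures that $\alpha$, $\alpha + e_i - e_j$, and $\alpha - e_i + e_j$ all lie in $\NN^n$, with corresponding coefficients $K_G(\mathbf v)$, $K_G(\mathbf v - e_i + e_j)$, and $K_G(\mathbf v + e_i - e_j)$. Since $N(\sigma_{\tilde G(\tilde{\mathbf a})}^\varphi)$ is Lorentzian by Theorem~\ref{thm:sigmaphi-sigmapsi-lorentzian}, Proposition~\ref{lem:log-concavity-of-coeffs} applied at $\alpha$ delivers exactly the desired inequality.

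I do not anticipate a real obstacle, since Theorem~\ref{thm:sigmaphi-sigmapsi-lorentzian} does all the heavy lifting. The only point requiring any care is extending the coefficient formula of Corollary~\ref{cor:sigmaphi-sigmapsi-coeffs} from $\mathcal P(\tilde G;\tilde{\mathbf a})$ to all of $\NN^{S_{\tilde G}}$. This extension is painless because, by Theorem~\ref{thm:fibers-of-varphi}, any $\beta \notin \mathcal P(\tilde G;\tilde{\mathbf a})$ has empty $\varphi$-fiber, forcing $K_{\tilde G}(\tilde{\mathbf a} - \ef\beta, 0) = 0$ (a flow polytope with integer netflow is either empty or has integer vertices), so the formula really does extend uniformly to a value of $0$ outside the support.
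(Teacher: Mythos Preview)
Your proposal is correct and follows essentially the same route as the paper: the paper derives Corollary~\ref{cor:kostant-log-concave} by noting that the log-concavity inequality of Proposition~\ref{lem:log-concavity-of-coeffs}, applied to the coefficients of $\sigma_{G(\mathbf a)}^\varphi$ (which are Kostant partition function values by Corollary~\ref{cor:sigmaphi-sigmapsi-coeffs}), is exactly the claimed inequality. Your argument is a careful expansion of this, explicitly constructing the auxiliary graph $\tilde G$ so that $\tilde G|_{[n]}=G$, choosing $\mathbf a$ large enough that the relevant exponents lie in $\NN^n$, and verifying that the coefficient formula extends by zero outside $\mathcal P(\tilde G;\tilde{\mathbf a})$---details the paper leaves implicit.
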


Note that Corollary \ref{cor:kostant-log-concave} also follows from the classical Alexandrov-Fenchel inequalities for mixed volumes, since $K_G(\mathbf v)$ can be seen as  mixed volumes of Minkowski sums of flow polytopes. 

A first stepping stone towards Theorem~\ref{thm:sigmaphi-sigmapsi-lorentzian} is to reduce to the problem of showing $N(\sigma_{G(\mathbf a)}^\varphi)$ is Lorentzian for all $G$; this is the content of Proposition~\ref{prop:varphi-implies-psi}. In order to do this we introduce the following construction.

\begin{definition}
\label{defn:G-ex}
For a graph $G$, we denote by $G^{\textup{ex}} = (V^{\textup{ex}}, E^{\textup{ex}})$ the graph obtained from $G$ by adding formal vertices $i^{\textup{ex}}$ for each vertex $i\in T_G$, by replacing edges $(i,n+1;j) \in S_G$ with edges $(i,i^{\textup{ex}};j)$, and by adding edges $(i^{\textup{ex}},n+1;1)$ for each $i^{\textup{ex}} \in T_G$. Formally, we have
\begin{align*}
V^{\textup{ex}}&\overset{\rm def}=[n]\sqcup\{i^{\textup{ex}}\colon i \in T_G\}\sqcup\{n+1\},\\
E^{\textup{ex}}&\overset{\rm def}=(E\setminus S_G) \sqcup\{(i,i^{\textup{ex}};j)\colon (i,n+1;j) \in S_G\}\sqcup\{(i^{\textup{ex}},n+1;1)\colon i \in T_G\}.
\end{align*}
See Figure~\ref{fig:G-ex} for an example. The graph $G$ can be recovered from $G^{\textup{ex}}$ by a series of contractions, so we call $G^{\textup{ex}}$ the \textbf{extension} of $G$.
\end{definition}

\begin{figure}[ht]
\begin{center}
\includegraphics[scale=0.55]{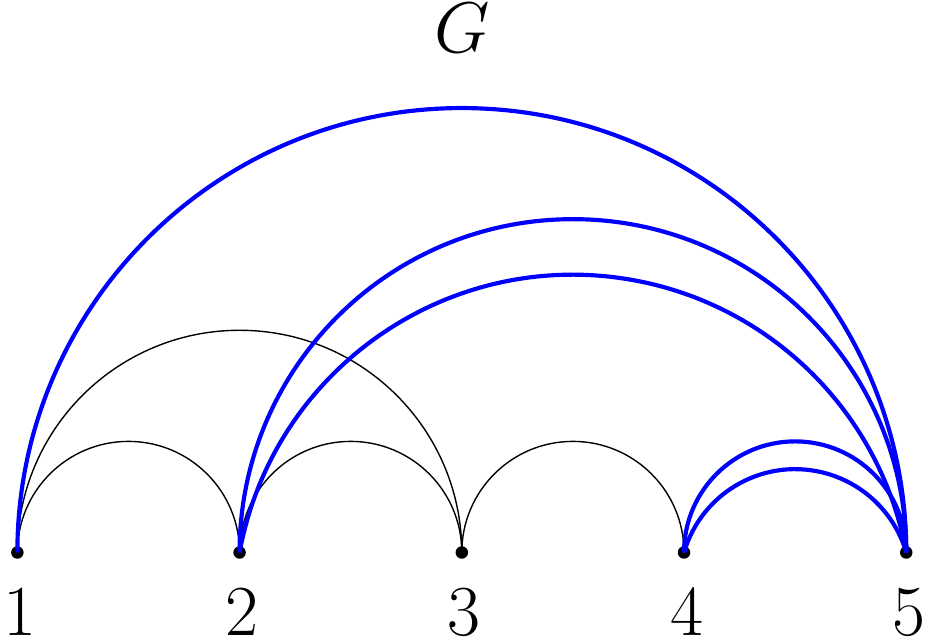}\hspace{1cm} \includegraphics[scale=0.55]{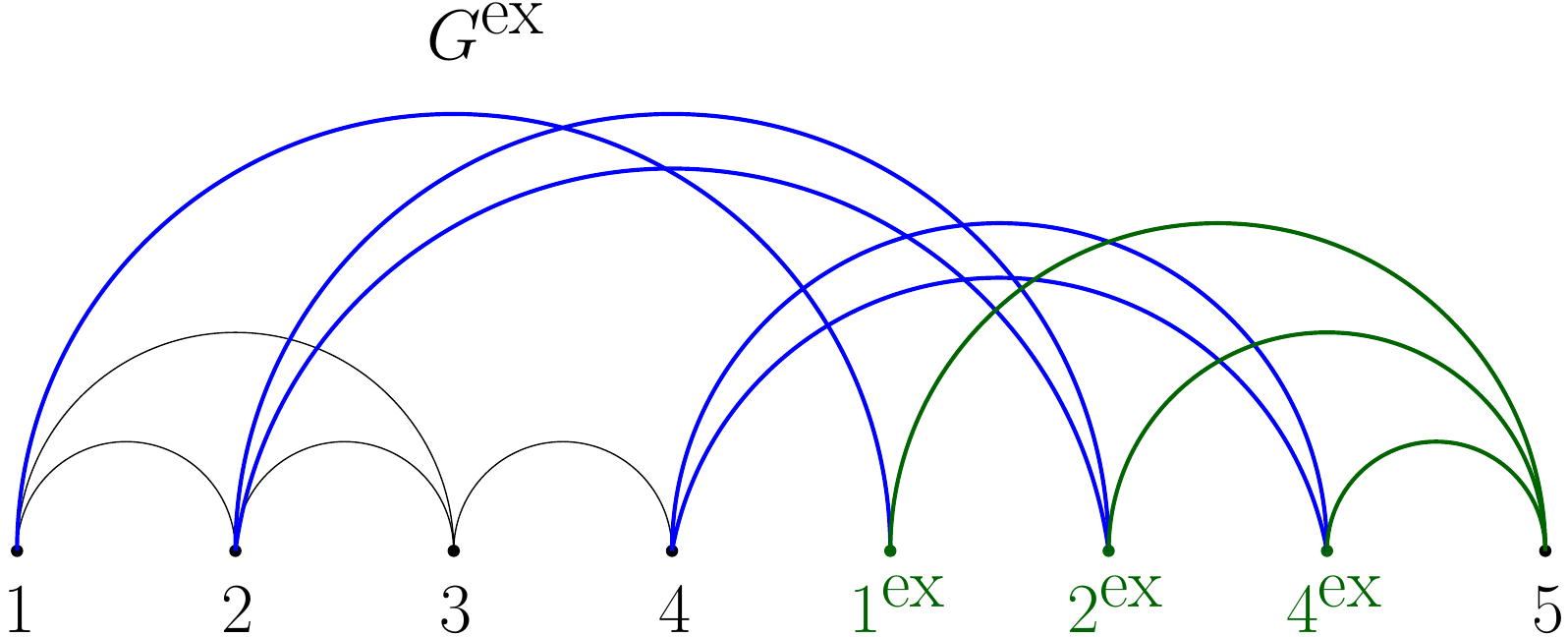}
\end{center}
\caption{The graph $G$ in Figure~\ref{fig:STst}, along with $G^{\textup{ex}}$, defined in Definition~\ref{defn:G-ex}.}
\label{fig:G-ex}
\end{figure}

\begin{definition}
For any two vectors $\mathbf p = (p_1, \dots, p_m) \in \RR^m$ and $\mathbf q = (q_1, \dots, q_n)\in \RR^n$, we denote by $\mathbf p \oplus\mathbf q \in \RR^{m+n}$ their \textbf{concatenation}, that is,
\[
\mathbf p \oplus\mathbf q = (p_1, \dots, p_m, q_1, \dots, q_n) \in \RR^{m+n}.
\]
For a netflow $\mathbf a$ for $G$ satisfying the conventions of this paper, we denote by $\mathbf a^{\textup{ex}}$ the netflow for $G^{\textup{ex}}$ given by
\[
\mathbf a|_{[n]} \oplus \mathbf 0_{T_{G^{\textup{ex}}}} \oplus -a_{n+1} = (a_1, \dots, a_n, \underbrace{0,\dots,0}_{|T_{G^{\textup{ex}}}|\textup{ many}},-a_{n+1});
\]
note that $\mathbf a^{\textup{ex}}$ also satisfies the conventions of this paper.
\end{definition}

\begin{lemma}
\label{lem:supp-iso}
The bijection $T_G\leftrightarrow S_{G^{\textup{ex}}}$ given by $i\leftrightarrow (i^{\textup{ex}},n+1;1)$ induces an isomorphism on the real vector spaces $\RR^{T_G}$ and $\RR^{S_{G^{\textup{ex}}}}$ by renaming basis elements according to the bijection. This isomorphism restricts to an integral equivalence $\mathcal Q(G;\mathbf a) \equiv \mathcal P(G^{\textup{ex}};\mathbf a^{\textup{ex}})$.
\end{lemma}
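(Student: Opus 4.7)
The proof is essentially a routine verification that the definitions line up, but let me lay out the steps explicitly. The first task is to check that the bijection is well defined. By construction of $G^{\textup{ex}}$, every edge that was originally in $S_G$ (an edge $(i,n+1;j)$ of $G$) gets replaced by an edge $(i,i^{\textup{ex}};j)$ that no longer touches the sink, and the only newly introduced edges into $n+1$ are the edges $(i^{\textup{ex}},n+1;1)$ for $i\in T_G$. Consequently $S_{G^{\textup{ex}}} = \{(i^{\textup{ex}},n+1;1)\colon i \in T_G\}$ is in natural bijection with $T_G$ via $i\leftrightarrow (i^{\textup{ex}},n+1;1)$, and renaming basis vectors accordingly gives a linear isomorphism $\RR^{T_G}\xrightarrow{\sim}\RR^{S_{G^{\textup{ex}}}}$ that sends the integer lattice to the integer lattice.

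Next I would unfold the two polytopes in terms of coordinate simplices. By definition,
\[
\mathcal Q(G;\mathbf a) = \sum_{i\in[n]}a_i\,\Delta_{T_{G,i}}\subseteq \RR^{T_G},
\]
while
\[
\mathcal P(G^{\textup{ex}};\mathbf a^{\textup{ex}}) = \sum_{i\in V(G^{\textup{ex}})\setminus\{n+1\}}a^{\textup{ex}}_i\,\Delta_{S_{G^{\textup{ex}},i}}\subseteq \RR^{S_{G^{\textup{ex}}}}.
\]
Because $\mathbf a^{\textup{ex}}$ has a zero in every coordinate corresponding to an extension vertex $i^{\textup{ex}}$, the second sum collapses to $\sum_{i\in[n]}a_i\,\Delta_{S_{G^{\textup{ex}},i}}$. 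Thus the lemma reduces to checking that the bijection above carries $\Delta_{T_{G,i}}$ onto $\Delta_{S_{G^{\textup{ex}},i}}$ for every $i \in[n]$; then the Minkowski sums match term by term and the integral equivalence follows.

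The key combinatorial identity I would verify is that, under $j\leftrightarrow (j^{\textup{ex}},n+1;1)$, we have $T_{G,i}\leftrightarrow S_{G^{\textup{ex}},i}$. Indeed, $j\in T_{G,i}$ means $j\in T_G$ and $(i,j)$ lies in the transitive closure of $G$. In $G^{\textup{ex}}$, any directed path from $i$ to $n+1$ must pass through some $j^{\textup{ex}}$ with $j\in T_G$, and moreover $i$ can reach $j^{\textup{ex}}$ precisely when $i$ can reach $j$ in $G$. Hence the edges of $S_{G^{\textup{ex}}}$ reachable from $i$ are exactly the edges $(j^{\textup{ex}},n+1;1)$ with $j\in T_{G,i}$, which is the desired bijection of index sets.

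There is no substantive obstacle here, since the extension construction was designed precisely so this correspondence would hold; the only thing to be careful about is the bookkeeping of which vertices of $G^{\textup{ex}}$ contribute nonzero terms in the Minkowski sum defining $\mathcal P(G^{\textup{ex}};\mathbf a^{\textup{ex}})$, and confirming that reachability in $G^{\textup{ex}}$ factors cleanly through reachability in $G$. Once these two points are set down, the integral equivalence is immediate from the fact that the bijection preserves the lattices and carries generating simplices to generating simplices.
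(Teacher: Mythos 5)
Your proposal is correct and follows essentially the same route as the paper's proof: expand both polytopes as Minkowski sums of coordinate simplices, use that $\mathbf a^{\textup{ex}}$ vanishes on the extension vertices to collapse the sum for $\mathcal P(G^{\textup{ex}};\mathbf a^{\textup{ex}})$, and match $\Delta_{T_{G,i}}$ with $\Delta_{S_{G^{\textup{ex}},i}}$ under the renaming bijection. The only difference is that you spell out the reachability check $T_{G,i}\leftrightarrow S_{G^{\textup{ex}},i}$ which the paper states without elaboration; this is a fine (slightly more explicit) presentation of the same argument.
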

 
\begin{proof}
By definition,
\[
\mathcal Q(G;\mathbf a) = \sum_{i\in [n]}a_i\Delta_{T_{G,i}} \hspace{0.4cm}\textup{ and }\hspace{0.4cm} \mathcal P(G^{\textup{ex}};\mathbf a^{\textup{ex}}) = \sum_{i\in[n]\sqcup T_{G^{\textup{ex}}}} a^{\textup{ex}}_i \Delta_{S_{G^{\textup{ex}},i}}.
\]
Since for every $i \in T_{G^{\textup{ex}}}$ we have $a^{\textup{ex}}_i = 0$, and for every $i \in [n]$ we have $a^{\textup{ex}}_i = a_i$, we may write
\[
\mathcal P(G^{\textup{ex}};\mathbf a^{\textup{ex}}) = \sum_{i\in[n]}a_i\Delta_{S_{G^{\textup{ex}}},i};
\]
furthermore, we have $S_{G^{\textup{ex}},i} = \{(i^{\textup{ex}},n+1;1)\colon i \in T_{G,i}\}$. Thus, the isomorphism sends $\Delta_{T_{G,i}}$ to $\Delta_{S_{G^{\textup{ex}},i}}$; passing to the Minkowski sum, we obtain the integral equivalence $\mathcal Q(G;\mathbf a) \equiv \mathcal P(G^{\textup{ex}};\mathbf a^{\textup{ex}})$.
\end{proof}

\begin{lemma}
\label{lem:fiber-iso}
The bijection $E(G)\leftrightarrow E(G^{\textup{ex}})\setminus S_{G^{\textup{ex}}}$ given by sending an edge $(i,n+1;k)\in S_G$ to $(i,i^{\textup{ex}};k) \in E(G^{\textup{ex}})\setminus S_{G^{\textup{ex}}}$ and an edge $(i,j;k) \in E(G)\setminus S_G$ to $(i,j;k) \in E(G^{\textup{ex}})\setminus S_{G^{\textup{ex}}}$ induces an isomorphism on the real vector spaces spanned by $E(G)$ and $E(G^{\textup{ex}})\setminus S_{G^{\textup{ex}}}$ by renaming basis elements according to the bijection. For every $\mathbf q \in \ZZ^{T_G}$, this isomorphism restricts to an integral equivalence
\begin{equation}
\label{eq:lin-equiv-fibers}
\mathcal F_{G|_{[n]}}(\mathbf a|_{[n]} - \mathbf q)\times\prod_{i\in T_G}q_i\Delta_{I_i} \equiv\mathcal F_{G^{\textup{ex}}|_{[n]\sqcup T_{G^{\textup{ex}}}}}(\mathbf a|_{[n]} \oplus(-\mathbf q)).
\end{equation}
\end{lemma}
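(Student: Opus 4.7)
The plan is to split the verification into two self-contained steps: first confirm that the prescribed edge map is a bijection and induces an integer-lattice-preserving coordinate-relabeling isomorphism of the ambient vector spaces, and second check the polytope equality pointwise. For the bijection, unpacking Definition~\ref{defn:G-ex} yields $E(G^{\textup{ex}})\setminus S_{G^{\textup{ex}}} = (E(G)\setminus S_G) \sqcup \{(i,i^{\textup{ex}};k)\colon (i,n+1;k)\in S_G\}$, which matches the decomposition $E(G)=(E(G)\setminus S_G)\sqcup S_G$ under the prescribed map. Since the induced linear map merely permutes basis vectors, it preserves the integer lattice automatically. Note also that the ambient space $\RR^{E(G|_{[n]})}\oplus\RR^{S_G}$ for the LHS product is exactly $\RR^{E(G)}$ (because $E(G|_{[n]})=E(G)\setminus S_G$ and $\bigsqcup_{i\in T_G}I_i=S_G$), so the two sides of \eqref{eq:lin-equiv-fibers} live in matched ambient spaces.

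For the polytope equality, I would verify both containments by transporting points across the bijection. Starting with $(f, (x_{i;k}))$ on the LHS, build a vector $\widetilde f$ on $E(G^{\textup{ex}})\setminus S_{G^{\textup{ex}}}$ by setting $\widetilde f_e = f_e$ for $e \in E(G)\setminus S_G$ and $\widetilde f_{(i,i^{\textup{ex}};k)} = x_{i;k}$; this is nonnegative by construction, and the outflow-minus-inflow balance at vertex $i\in[n]$ reads $(a_i-q_i)+\sum_k x_{i;k}=(a_i-q_i)+q_i=a_i$ after adding the new outflow through the extension edges, while at vertex $i^{\textup{ex}}$ it reads $0-\sum_k x_{i;k}=-q_i$, placing $\widetilde f$ in the RHS. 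Conversely, for any $\widetilde f$ in the RHS, the netflow constraint at $i^{\textup{ex}}$ forces $\sum_k \widetilde f_{(i,i^{\textup{ex}};k)} = q_i$, so reading off these coordinates yields a point of $q_i\Delta_{I_i}$, and the restriction of $\widetilde f$ to $E(G)\setminus S_G$ then has netflow $a_i - q_i$ at each $i \in [n]$ by the same bookkeeping in reverse, recovering $(f,(x_{i;k}))$. Integrality is preserved because the entire correspondence is coordinate-by-coordinate, so the map descends to an integral equivalence.

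The conceptual content making this plan routine is that the formal vertices $i^{\textup{ex}}$ convert the data ``$q_i$ units of flow distributed among $I_i$ and exiting $i$'' into ``$q_i$ units passing through $i^{\textup{ex}}$'', which absorbs the simplex factors $\prod_{i \in T_G} q_i\Delta_{I_i}$ directly into the enlarged flow polytope with no auxiliary data. The only thing demanding care is the outflow-minus-inflow sign convention inherited from the defining equation $M_Gf = \mathbf a$, so I do not anticipate any substantive obstacle in carrying out the proof.
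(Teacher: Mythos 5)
Your proposal is correct and follows essentially the same route as the paper: transport points across the edge bijection and verify flow conservation at the vertices $i\in[n]$ and $i^{\textup{ex}}$, with integrality automatic because the map is a coordinate relabeling. The only cosmetic difference is that the paper interprets a point of the left-hand side as an $\mathbf a$-flow on $G$ with outflow $q_i$ at $i\in T_G$ via Corollary~\ref{cor:fibers-of-psi}, whereas you check the netflow equations directly; both amount to the same bookkeeping.
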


In light of Corollary~\ref{cor:fibers-of-psi}, note that the left side of Equation~\eqref{eq:lin-equiv-fibers} is the fiber of $\mathbf q$ under $\mathcal F_G(\mathbf a)\to\mathcal Q(G;\mathbf a)$. For brevity of notation, let us temporarily denote by $\widetilde{\mathbf q}\in \ZZ^{S_{G^{\textup{ex}}}}$ the image of $\mathbf q\in \ZZ^{T_G}$ under the isomorphism in Lemma~\ref{lem:supp-iso}. In this notation, Theorem~\ref{thm:fibers-of-varphi} implies that the right side of Equation~\eqref{eq:lin-equiv-fibers} is (integrally equivalent to) the fiber of $\widetilde{\mathbf q}\in \ZZ^{S_{G^{\textup{ex}}}}$ under $\mathcal F_G^{\textup{ex}}(\mathbf a^{\textup{ex}}) \to \mathcal P(G^{\textup{ex}},\mathbf a^{\textup{ex}})$.

\begin{proof}[Proof of Lemma~\ref{lem:fiber-iso}]
A point $f \in \mathcal F_{G|_{[n]}}(\mathbf a|_{[n]} - \mathbf q)\times\prod_{i\in T_G}q_i\Delta_{I_i}$ can be interpreted as a flow in $\mathcal F_G(\mathbf a)$ with outflow $q_i$ at each vertex $i \in T_G$, by Corollary~\ref{cor:fibers-of-psi}. Under the isomorphism in Lemma~\ref{lem:fiber-iso}, $f$ gets mapped to a flow in $G^{\textup{ex}}|_{[n]\sqcup T_{G^{\textup{ex}}}}$ with netflow $a_i$ at each vertex $i \in [n]$ and netflow $-q_i$ at each vertex $i^{\textup{ex}} \in T_{G^{\textup{ex}}}$. In other words, the image is in $\mathcal F_{G^{\textup{ex}}|_{[n]\sqcup T_{G^{\textup{ex}}}}}(\mathbf a|_{[n]} \oplus(-\mathbf q))$.

Conversely, the preimage of a flow $\mathcal F_{G^{\textup{ex}}|_{[n]\sqcup T_{G^{\textup{ex}}}}}(\mathbf a|_{[n]} \oplus(-\mathbf q))$ is a flow in $\mathcal F_G(\mathbf a)$ with netflow $q_i$ at each vertex $i \in T_G$; hence by Corollary~\ref{cor:fibers-of-psi} the preimage of $f$ is in $\mathcal F_{G|_{[n]}}(\mathbf a|_{[n]} - \mathbf q)\times\prod_{i\in T_G}q_i\Delta_{I_i}$.
\end{proof}

\begin{lemma}
\label{lem:sigmapsi-sigma-gex-phi}
The bijection $T_G\leftrightarrow S_{G^{\textup{ex}}}$ given by $i\leftrightarrow (i^{\textup{ex}},n+1;1)$ induces an isomorphism on the polynomial rings $\RR[(x_i)_{i\in T_G}]$ and $\RR[(x_i)_{i\in S_{G^{\textup{ex}}}}]$ by renaming variables according to the bijection. Under this isomorphism, the polynomial $N(\sigma_{G(\mathbf a)}^\psi)$ is sent to $N(\sigma_{G^{\textup{ex}}(\mathbf a^{\textup{ex}})}^\varphi)$.
\end{lemma}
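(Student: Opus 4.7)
The plan is to match the two polynomials coefficient-by-coefficient under the polynomial ring isomorphism induced by the bijection $T_G\leftrightarrow S_{G^{\textup{ex}}}$, using only the structural results already in hand. By Corollary~\ref{cor:sigmaphi-sigmapsi-coeffs}, both $\sigma_{G(\mathbf a)}^\psi$ and $\sigma_{G^{\textup{ex}}(\mathbf a^{\textup{ex}})}^\varphi$ are monomial expansions whose exponents run over lattice points of generalized permutahedra, with coefficients equal to the cardinalities of the fibers of $\psi$ and $\varphi$, respectively.

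First, I would match the supports. Lemma~\ref{lem:supp-iso} asserts that the bijection $T_G\leftrightarrow S_{G^{\textup{ex}}}$ extends to an integral equivalence $\mathcal Q(G;\mathbf a)\equiv\mathcal P(G^{\textup{ex}};\mathbf a^{\textup{ex}})$, and therefore to a bijection of lattice point sets $\mathcal Q(G;\mathbf a)\cap\ZZ^{T_G}\leftrightarrow\mathcal P(G^{\textup{ex}};\mathbf a^{\textup{ex}})\cap\ZZ^{S_{G^{\textup{ex}}}}$. For each $\mathbf p$ on the left, write $\widetilde{\mathbf p}$ for its image on the right.

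Next, I would match the coefficients. By Corollary~\ref{cor:fibers-of-psi}, the coefficient of $\mathbf x^{\mathbf p}$ in $\sigma_{G(\mathbf a)}^\psi$ counts lattice points of
\[
\mathcal F_{G|_{[n]}}(\mathbf a|_{[n]}-\mathbf p)\times\prod_{i\in T_G}p_i\Delta_{I_i}.
\]
Lemma~\ref{lem:fiber-iso} establishes an integral equivalence of this polytope with $\mathcal F_{G^{\textup{ex}}|_{[n]\sqcup T_{G^{\textup{ex}}}}}(\mathbf a|_{[n]}\oplus(-\mathbf p))$, which by Theorem~\ref{thm:fibers-of-varphi} (as noted in the discussion following Lemma~\ref{lem:fiber-iso}) is in turn integrally equivalent to the fiber $\varphi^{-1}(\widetilde{\mathbf p})\cap\mathcal F_{G^{\textup{ex}}}(\mathbf a^{\textup{ex}})$. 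Counting lattice points through this chain shows that the coefficient of $\widetilde{\mathbf x}^{\widetilde{\mathbf p}}$ in $\sigma_{G^{\textup{ex}}(\mathbf a^{\textup{ex}})}^\varphi$ equals the coefficient of $\mathbf x^{\mathbf p}$ in $\sigma_{G(\mathbf a)}^\psi$.

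Finally, since the variable renaming is coordinatewise, the exponent vectors $\mathbf p$ and $\widetilde{\mathbf p}$ have equal factorials in the sense of \eqref{eq:norm}, so the normalization operator $N$ commutes with the polynomial ring isomorphism. Combining these observations yields $N(\sigma_{G(\mathbf a)}^\psi)\mapsto N(\sigma_{G^{\textup{ex}}(\mathbf a^{\textup{ex}})}^\varphi)$. I do not expect a serious obstacle here: the proof is essentially bookkeeping, assembled from the three preceding structural results, with the only mild care needed being in tracking which vertex contributes which netflow entry in $\mathbf a^{\textup{ex}}$ so that the subtractions $a_i-\ef{\mathbf p}_i$ on the two sides really do line up via the bijection.
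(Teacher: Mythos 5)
Your proposal is correct and follows essentially the same route as the paper: matching monomials via Lemma~\ref{lem:supp-iso} and matching coefficients via the integral equivalence of fibers in Lemma~\ref{lem:fiber-iso} (with the fiber identifications from Corollary~\ref{cor:fibers-of-psi} and Theorem~\ref{thm:fibers-of-varphi}). Your closing remark that $N$ commutes with the coordinatewise variable renaming is left implicit in the paper but is a harmless addition.
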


\begin{proof}
Explicitly, we need to show that the polynomials
\begin{equation}
\label{eq:sigmapsi-explicit}
N(\sigma_{G(\mathbf a)}^\psi) = \sum_{\mathbf q \in \mathcal Q(G;\mathbf a)\cap \ZZ^{T_G}}(\psi^{-1}(\mathbf q)\cap \ZZ^{E(G)})\mathbf x^{\mathbf q}
\end{equation}
and
\begin{equation}
\label{eq:sigmaphi-Gex-explicit}
N(\sigma_{G^{\textup{ex}}(\mathbf a^{\textup{ex}})}^\varphi) = \sum_{\mathbf p \in\mathcal P(G^{\textup{ex}},\mathbf a^{\textup{ex}})\cap \ZZ^{S_{G^{\textup{ex}}}}}(\varphi^{-1}(\mathbf p)\cap \ZZ^{E(G^{\textup{ex}})})\mathbf x^{\mathbf p}
\end{equation}
agree after renaming variables according to the bijection. We stress that the map $\psi$ in Equation~\eqref{eq:sigmapsi-explicit} is the projection $\mathcal F_G(\mathbf a)\to \mathcal Q(G;\mathbf a)$, whereas the map $\varphi$ in Equation~\eqref{eq:sigmaphi-Gex-explicit} is the projection $\mathcal F_{G^{\textup{ex}}}(\mathbf a^{\textup{ex}}) \to \mathcal P(G^{\textup{ex}};\mathbf a^{\textup{ex}})$.

By Lemma~\ref{lem:supp-iso}, the monomials $\mathbf x^{\mathbf q}$ appearing in Equation~\eqref{eq:sigmapsi-explicit} and the monomials $\mathbf x^{\mathbf p}$ appearing in Equation~\eqref{eq:sigmaphi-Gex-explicit} correspond to each other under the isomorphism $\RR[(x_i)_{i\in T_G}] \cong \RR[(x_i)_{i\in S_{G^{\textup{ex}}}}]$.

By Lemma~\ref{lem:fiber-iso}, the fibers $\psi^{-1}(\mathbf q)$ appearing in Equation~\eqref{eq:sigmapsi-explicit} and the corresponding fibers $\varphi^{-1}(\mathbf p)$ appearing in Equation~\eqref{eq:sigmaphi-Gex-explicit} are integrally equivalent. Hence the coefficients of the monomials appearing in Equations~\eqref{eq:sigmapsi-explicit} and~\eqref{eq:sigmaphi-Gex-explicit} match.
\end{proof}

\begin{proposition}
\label{prop:varphi-implies-psi}
Suppose $N(\sigma_{G(\mathbf a)}^\varphi)$ is Lorentzian for every $G$. Then $N(\sigma_{G(\mathbf a)}^\psi)$ is Lorentzian for every $G$.
\end{proposition}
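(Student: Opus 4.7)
The plan is to reduce Proposition~\ref{prop:varphi-implies-psi} directly to Lemma~\ref{lem:sigmapsi-sigma-gex-phi}. Fix $G$ and $\mathbf a$ satisfying the standing conventions of Section~\ref{sec:projections-of-flow-polytopes}, and form the extended graph $G^{\textup{ex}}$ together with the extended netflow $\mathbf a^{\textup{ex}}$ as in Definition~\ref{defn:G-ex}. I first check that the pair $(G^{\textup{ex}},\mathbf a^{\textup{ex}})$ again satisfies the conventions: after labeling each auxiliary vertex $i^{\textup{ex}}$ by an integer in $\{n+1,\dots,n+|T_G|\}$ and relabeling the unique sink as $n+|T_G|+1$, every edge of $G^{\textup{ex}}$ is oriented from smaller to larger, and by construction $\mathbf a^{\textup{ex}}\in\ZZ_{\geq 0}^{n+|T_G|}\times\ZZ_{\leq 0}$, with a unique sink inherited from $G$.

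With admissibility verified, I would apply the standing hypothesis of the proposition to the pair $(G^{\textup{ex}},\mathbf a^{\textup{ex}})$, concluding that $N(\sigma_{G^{\textup{ex}}(\mathbf a^{\textup{ex}})}^\varphi)$ is Lorentzian. Lemma~\ref{lem:sigmapsi-sigma-gex-phi} then identifies this Lorentzian polynomial with $N(\sigma_{G(\mathbf a)}^\psi)$ via the variable renaming induced by the bijection $T_G\leftrightarrow S_{G^{\textup{ex}}}$. Thus the proposition reduces to the observation that the Lorentzian property is preserved under a bijective renaming of variables, which I would deduce either as a direct consequence of Theorem~\ref{lem:f(Av)} (taking $A$ to be the permutation matrix encoding the bijection, whose entries are all $0$ or $1$), or by noting from the recursive definition that $\mathrm M$-convexity of the support and the condition that Hessians have at most one positive eigenvalue are manifestly invariant under relabeling of coordinates.

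I do not anticipate any genuine obstacle here: the real content lies in Lemma~\ref{lem:sigmapsi-sigma-gex-phi}, which in turn rests on Lemmas~\ref{lem:supp-iso} and~\ref{lem:fiber-iso}. The present proposition is just the formal packaging of that bijection into the desired implication. The only point requiring a small amount of care is checking that the relabeling of $G^{\textup{ex}}$ places it within the conventions of the section (so that the hypothesis is actually applicable), and this is immediate from the construction of $\mathbf a^{\textup{ex}}$.
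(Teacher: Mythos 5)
Your proposal is correct and follows essentially the same route as the paper: invoke Lemma~\ref{lem:sigmapsi-sigma-gex-phi} to identify $N(\sigma_{G(\mathbf a)}^\psi)$ with $N(\sigma_{G^{\textup{ex}}(\mathbf a^{\textup{ex}})}^\varphi)$ up to renaming variables, and apply the hypothesis to $G^{\textup{ex}}$. Your added checks (that $(G^{\textup{ex}},\mathbf a^{\textup{ex}})$ satisfies the standing conventions and that relabeling variables preserves the Lorentzian property) are points the paper leaves implicit, but they are straightforward and do not change the argument.
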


\begin{proof}
Lemma~\ref{lem:sigmapsi-sigma-gex-phi} asserts that up to renaming variables, we have the equality
\[
N(\sigma_{G(\mathbf a)}^\psi) = N(\sigma_{G^{\textup{ex}}(\mathbf a^{\textup{ex}})}^\varphi).
\]
By assumption, $N(\sigma_{G^{\textup{ex}}(\mathbf a^{\textup{ex}})}^\varphi)$ is Lorentzian.
\end{proof}

\begin{lemma}
\label{lem:simple-at-sink-to-matrix}
Let $\mathbf d \in (\sum_{i=1}^n a_i - 2)\Delta_{S_G}\cap\ZZ^{S_G}$ be an integer point in the scaled coordinate simplex of $S_G$. Suppose that the $|S_G|\times |S_G|$ matrix
\[
K_{\mathbf d} \stackrel{\rm def}=(k_{i,j})_{i,j \in S_G}; \hspace{1cm} k_{(i_1;k_1),(i_2;k_2)} \stackrel{\rm def}= K_{G|_{[n]}}(\mathbf a|_{[n]} - \ef{\mathbf d} - e_{i_1} - e_{i_2})
\]
has at most one positive eigenvalue. Then $N(\sigma_{G(\mathbf a)}^\varphi)$ is Lorentzian. 
\end{lemma}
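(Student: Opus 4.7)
The plan is to apply Lemma \ref{lem:lorentzian-criterion} directly to $\sigma_{G(\mathbf a)}^\varphi$, viewed as a homogeneous polynomial in the $|S_G|$ variables $(x_{i;k})_{(i;k)\in S_G}$, and to identify the Hessian-style matrices $H_{\mathbf d}$ appearing in that criterion with the matrices $K_{\mathbf d}$ of the hypothesis. Once this identification is made, the lemma follows immediately.

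First I would verify that $\sigma_{G(\mathbf a)}^\varphi \in \mathrm M_{|S_G|}^{\sum_i a_i}$. Homogeneity of degree $\sum_i a_i$ follows because every $\mathbf p \in \mathcal P(G;\mathbf a)$ satisfies $\sum_{e\in S_G} p_e = \sum_i \ef{\mathbf p}_i = \sum_i a_i$. Nonnegativity of coefficients is immediate from Corollary~\ref{cor:sigmaphi-sigmapsi-coeffs}. For M-convexity of the support, I would observe that the support equals $\mathcal P(G;\mathbf a) \cap \ZZ^{S_G}$: for any lattice point $\mathbf p \in \mathcal P(G;\mathbf a)$, Theorem~\ref{thm:fibers-of-varphi} identifies the fiber with a nonempty integer-netflow flow polytope, which contains a lattice point by total unimodularity of $M_G$. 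Since $\mathcal P(G;\mathbf a)$ is a $y$-generalized permutahedron (Proposition~\ref{prop:flow-project-gp}), its lattice points form an M-convex set.

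The heart of the argument is the identification $H_{\mathbf d} = K_{\mathbf d}$. Because vertex $n+1$ is the unique sink, any $\mathbf a$-flow with zero netflow at $n+1$ must assign zero flow to every edge of $S_G$, so $K_G(\mathbf v, 0) = K_{G|_{[n]}}(\mathbf v)$ for every $\mathbf v \in \ZZ^n$. Combined with Corollary~\ref{cor:sigmaphi-sigmapsi-coeffs} and the identity $\ef{\mathbf d + e_{(i_1;k_1)} + e_{(i_2;k_2)}} = \ef{\mathbf d} + e_{i_1} + e_{i_2}$, this gives
\[
H_{(i_1;k_1),(i_2;k_2);\mathbf d} \;=\; K_{G|_{[n]}}\bigl(\mathbf a|_{[n]} - \ef{\mathbf d} - e_{i_1} - e_{i_2}\bigr) \;=\; k_{(i_1;k_1),(i_2;k_2)}.
\]
The lattice points of $(\sum_i a_i - 2)\Delta_{S_G}$ are precisely the nonnegative integer vectors in $\ZZ^{S_G}$ of coordinate sum $\sum_i a_i - 2$, which matches the range of $\mathbf d$ in Lemma~\ref{lem:lorentzian-criterion} exactly, so the hypothesis yields the at-most-one-positive-eigenvalue condition for every relevant $H_{\mathbf d}$, and Lemma~\ref{lem:lorentzian-criterion} concludes. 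There is no significant obstacle here: the lemma is a bookkeeping step that recasts the Lorentzian criterion in terms of Kostant partition function matrices, and the mildest subtlety is the M-convexity check in the setup.
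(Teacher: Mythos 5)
Your proposal is correct and follows essentially the same route as the paper's proof: identify $K_{\mathbf d}$ with the Hessian matrices $H_{\mathbf d}$ of Lemma~\ref{lem:lorentzian-criterion} via Corollary~\ref{cor:sigmaphi-sigmapsi-coeffs}, note M-convexity of the support from Proposition~\ref{prop:flow-project-gp}, and conclude. You merely spell out some details the paper leaves implicit (the equality of the support with $\mathcal P(G;\mathbf a)\cap\ZZ^{S_G}$ and the identity $K_G(\mathbf v,0)=K_{G|_{[n]}}(\mathbf v)$), which is harmless.
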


\begin{proof}
The support of $N(\sigma_{G(\mathbf a)}^\varphi)$ is M-convex by Proposition~\ref{prop:flow-project-gp}. 

By Corollary~\ref{cor:sigmaphi-sigmapsi-coeffs}, the $ij$-th element of $K_{\mathbf d}$ is the coefficient of $\mathbf x^{\mathbf d + e_i +e_j}$ in $\sigma_{G(\mathbf a)}^\varphi$; equivalently, $K_{\mathbf d}$ is the Hessian of $\partial_{\mathbf d}N(\sigma_{G(\mathbf a)}^\varphi)$. Since, by assumption, $K_{\mathbf d}$ has at most one positive eigenvalue, Lemma~\ref{lem:lorentzian-criterion} asserts that $N(\sigma_{G(\mathbf a)}^\varphi)$ is Lorentzian.
\end{proof}

\begin{definition}
\label{defn:G^-}
For a graph $G$ as in the conventions of this section, denote by $G^-$ the graph with vertex set $[n+1]$ and edge set
\[
E(G^-) = E(G|_{[n]}) \sqcup \{e \in S_G\colon e = (i,n+1;1)\}.
\]
In other words, $G^-$ is obtained from $G$ by replacing, for each $i \in [n]$ with $M(i,n+1) \geq 1$, the set of edges connecting $i$ to the sink with a single edge connecting $i$ to the sink. See Figure~\ref{fig:example-G^-} for an example. Note that since $G^-$ has at most one edge connecting $i$ to $n+1$ for any $i$, we have $S_{G^-} = T_{G^-} = T_G$; we index the variables appearing in $\sigma_{G^-(\mathbf a)}^\varphi$ with $(x_i)_{i \in T_G}$.
\end{definition}

\begin{figure}[ht]
\begin{center}
\includegraphics[scale=0.6]{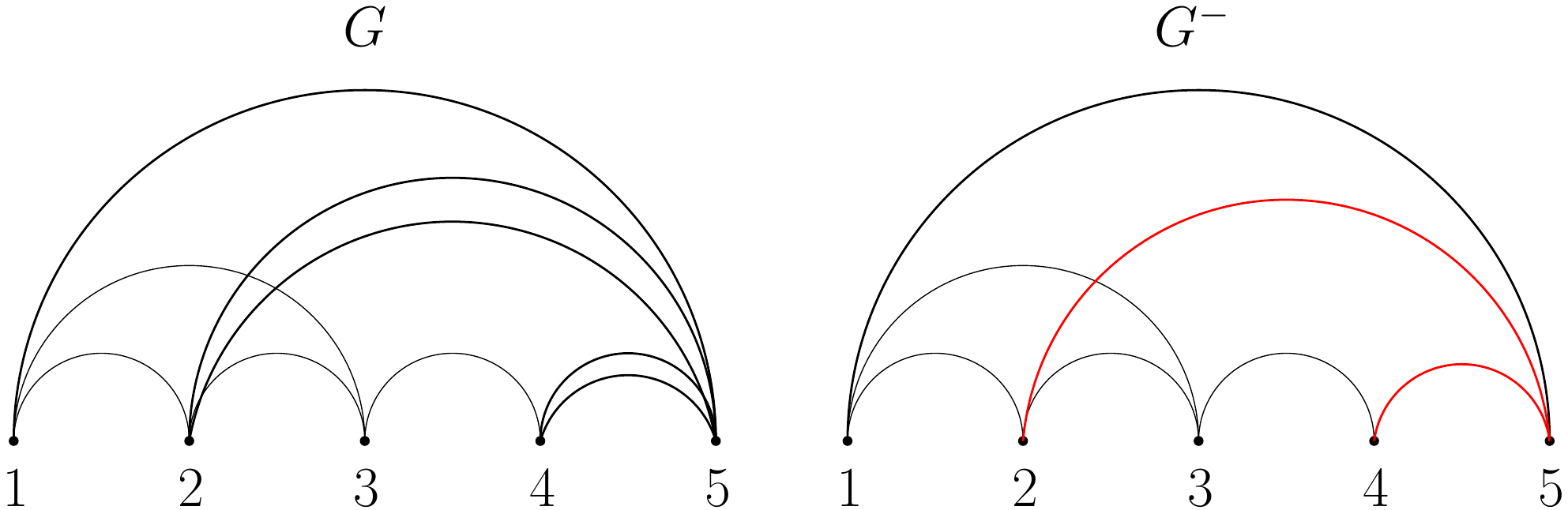}
\caption{The graph $G$ from Figures~\ref{fig:STst} and~\ref{fig:G-ex}. The graph $G^-$ constructed from $G$ is shown beside it; see Definition~\ref{defn:G^-}.}
\label{fig:example-G^-}
\end{center}
\end{figure}

\begin{lemma}
\label{lem:G^--implies-G}
Suppose $(N(\sigma_{G^-(\mathbf a)}^\varphi))(x_i)$ is Lorentzian. Then $(N(\sigma_{G(\mathbf a)}^\varphi))(x_{i;k})$ is also Lorentzian.
\end{lemma}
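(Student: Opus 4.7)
The plan is to realize $N(\sigma_{G(\mathbf a)}^\varphi)$ as the pullback of $N(\sigma_{G^-(\mathbf a)}^\varphi)$ along a linear substitution whose matrix has nonnegative entries, and then invoke Theorem~\ref{lem:f(Av)}.

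First I would use Corollary~\ref{cor:sigmaphi-sigmapsi-coeffs} to write both polynomials explicitly. Since $G^-|_{[n]} = G|_{[n]}$, the Kostant partition function appearing in the coefficients of either polynomial is the same function; only the indexing of the variables differs. For $\mathbf q = (q_{i;k})_{(i;k)\in S_G} \in \ZZ_{\geq 0}^{S_G}$ and $\mathbf p = (p_i)_{i \in T_G} \in \ZZ_{\geq 0}^{T_G}$, we have
\[
N(\sigma_{G(\mathbf a)}^\varphi)(x_{i;k}) = \sum_{\mathbf q}K_{G|_{[n]}}(\mathbf a|_{[n]}-\ef{\mathbf q},0)\prod_{(i;k)\in S_G}\frac{x_{i;k}^{q_{i;k}}}{q_{i;k}!}
\]
and
\[
N(\sigma_{G^-(\mathbf a)}^\varphi)(x_i) = \sum_{\mathbf p}K_{G|_{[n]}}(\mathbf a|_{[n]}-\mathbf p,0)\prod_{i\in T_G}\frac{x_i^{p_i}}{p_i!},
\]
where in the first formula $\ef{\mathbf q}_i = \sum_{k=1}^{M(i,n+1)} q_{i;k}$.

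The main step is to show that the substitution $\rho\colon x_i \mapsto \sum_{k=1}^{M(i,n+1)}x_{i;k}$ sends $N(\sigma_{G^-(\mathbf a)}^\varphi)$ to $N(\sigma_{G(\mathbf a)}^\varphi)$. Applying $\rho$ to each factor and expanding via the multinomial theorem gives
\[
\frac{(x_{i;1}+\cdots+x_{i;M(i,n+1)})^{p_i}}{p_i!} = \sum_{q_{i;1}+\cdots+q_{i;M(i,n+1)}=p_i}\prod_{k=1}^{M(i,n+1)}\frac{x_{i;k}^{q_{i;k}}}{q_{i;k}!},
\]
and after regrouping the double sum by $\mathbf q$ (with marginals $p_i = \ef{\mathbf q}_i$) one recovers $N(\sigma_{G(\mathbf a)}^\varphi)(x_{i;k})$ exactly.

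Finally, I would observe that $\rho$ is given by $\mathbf x \mapsto A\mathbf x$ for a $0/1$ matrix $A$ (whose $(i,(i';k))$ entry is $1$ if $i=i'$ and $0$ otherwise), so that $A$ has nonnegative entries. By assumption $N(\sigma_{G^-(\mathbf a)}^\varphi)$ is Lorentzian, and Theorem~\ref{lem:f(Av)} then implies that $N(\sigma_{G(\mathbf a)}^\varphi) = N(\sigma_{G^-(\mathbf a)}^\varphi)(A\mathbf x)$ is Lorentzian as well. There is no substantive obstacle here once the correct substitution is identified; the crucial observation is that the normalization operator interacts with multinomial expansion in precisely the way needed to absorb the multinomial coefficients that would otherwise obstruct equality of the two polynomials.
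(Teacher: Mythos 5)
Your argument is correct, but it takes a genuinely different route from the paper. The paper proves this lemma through the Hessian criterion of Lemma~\ref{lem:simple-at-sink-to-matrix}: it observes that each matrix $K_{\mathbf d}$ for $G$ is obtained from the corresponding matrix $K_{\ef{\mathbf d}}^-$ for $G^-$ by duplicating rows and columns (so the two have equal rank, and $K_{\ef{\mathbf d}}^-$ is a principal submatrix of $K_{\mathbf d}$), and then uses Cauchy's Interlacing Theorem (Proposition~\ref{prop:cauchy-interlacing}) to conclude that $K_{\mathbf d}$ inherits at most one positive eigenvalue; M-convexity of the support is invoked separately via Proposition~\ref{prop:flow-project-gp}. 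You instead realize $N(\sigma_{G(\mathbf a)}^\varphi)$ as the pullback $N(\sigma_{G^-(\mathbf a)}^\varphi)(A\mathbf x)$ under the nonnegative $0/1$ matrix $A$ encoding $x_i \mapsto \sum_k x_{i;k}$, and apply Theorem~\ref{lem:f(Av)}; this is shorter, avoids interlacing altogether, and gives M-convexity of the support for free since Lorentzian polynomials have M-convex support. The one point you should make explicit is the regrouping step: after the multinomial expansion you obtain a sum over all $\mathbf q \in \ZZ_{\geq 0}^{S_G}$ whose image $\ef{\mathbf q}$ is a lattice point of $\mathcal P(G^-;\mathbf a)$, whereas Corollary~\ref{cor:sigmaphi-sigmapsi-coeffs} indexes $\sigma_{G(\mathbf a)}^\varphi$ by lattice points of $\mathcal P(G;\mathbf a)$, so you need these index sets to carry the same coefficients. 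This is easily closed: if $K_{G|_{[n]}}(\mathbf a|_{[n]} - \ef{\mathbf q}) > 0$, any integer flow on $G|_{[n]}$ with netflow $\mathbf a|_{[n]} - \ef{\mathbf q}$ glues with the values $\mathbf q$ on the sink edges to give a point of $\mathcal F_G(\mathbf a)$ projecting to $\mathbf q$, so such $\mathbf q$ lies in $\mathcal P(G;\mathbf a)$ and, by Theorem~\ref{thm:fibers-of-varphi}, contributes exactly this coefficient, while all other $\mathbf q$ contribute zero on both sides. With that remark added, your substitution argument is a complete and arguably cleaner proof; it is in the same spirit as the paper's own use of Theorem~\ref{lem:f(Av)} later in the proof of Theorem~\ref{thm:sigmaphi-sigmapsi-lorentzian}, where a diagonal $0/1$ substitution plays an analogous role.
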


\begin{proof}
The support of $N(\sigma_{G(\mathbf a)}^\varphi)$ is M-convex by Proposition~\ref{prop:flow-project-gp}.

By Lemma~\ref{lem:simple-at-sink-to-matrix}, we need to show that for every $\mathbf d \in (\sum_{i=1}^n a_i - 2)\Delta_{S_G}\cap\ZZ^{S_G}$, the $|S_G|\times |S_G|$ matrix
\[
K_{\mathbf d} =(k_{i,j})_{i,j \in S_G}; \hspace{1cm} k_{(i_1;k_1),(i_2;k_2)} = K_{G|_{[n]}}(\mathbf a|_{[n]} - \ef{\mathbf d} - e_{i_1} - e_{i_2})
\]
has at most one positive eigenvalue. The matrix $K_{\mathbf d}$  is obtained from the $|T_G|\times|T_G|$ matrix
\[
K_{\ef{\mathbf d}}^- \stackrel{\rm def}=(k_{i,j}^-)_{i,j \in T_G}; \hspace{1cm} k_{i,j}^- \stackrel{\rm def}= K_{G|_{[n]}}(\mathbf a|_{[n]} - \ef{\mathbf d} - e_i - e_j)
\]
first by repeating the $i$th row $M(i,n+1)$ many times for each $i$, and then by repeating the $i$th column $M(i,n+1)$ many times for each $i$. Note that the rank of $K_{\ef{\mathbf d}}^-$ is equal to the rank of $K_{\mathbf d}$; we write
\[
r\overset{\rm def}=\textup{rank}(K_{\ef{\mathbf d}}^-) = \textup{rank}(K_{\mathbf d}).
\]

Observe, by Corollary~\ref{cor:sigmaphi-sigmapsi-coeffs} that the $ij$-th entry of $K_{\ef{\mathbf d}}^-$ is the coefficient of $\mathbf x^{\ef{\mathbf d} + e_i + e_j}$ in $\sigma_{G^-(\mathbf a)}^\varphi$. By assumption, $N(\sigma_{G^-(\mathbf a)}^\varphi)$ is Lorentzian; hence, Lemma~\ref{lem:lorentzian-criterion} asserts that $K_{\ef{\mathbf d}}^-$ has at most one positive eigenvalue for any $\ef{\mathbf d} \in (\sum_{i=1}^na_i - 2)\Delta_{T_G}\cap\ZZ^{T_G}$. In particular it has at least $r-1$ negative  eigenvalues. Note also that $K_{\ef{\mathbf d}}^-$ is a principal submatrix of $K_{\mathbf d}$; by Cauchy's Interlacing Theorem (Proposition~\ref{prop:cauchy-interlacing}), the eigenvalues $\alpha_1 \leq \alpha_2 \leq \dots \leq \alpha_{|S_G|}$ of $K_{\mathbf d}$ and the eigenvalues $\beta_1 \leq \dots \leq \beta_{|T_G|}$ of $K_{\ef{\mathbf d}}^-$ satisfy
\[
\alpha_i \leq \beta_i \hspace{1cm}\textup{ for all } 1 \leq i \leq |T_G|.
\]
Since $K_{\ef{\mathbf d}}^-$ has at least $r-1$ negative  eigenvalues,
\[
\alpha_i \leq \beta_i < 0 \hspace{1cm}\textup{  for all } 1 \leq i \leq r-1,
\] 
so $K_{\mathbf d}$ also has at least $r-1$ negative  eigenvalues. Furthermore, $K_{\mathbf d}$ has rank $r$. Hence, $K_{\mathbf d}$ also has at most one positive eigenvalue, and $(N(\sigma_{G(\mathbf a)}^\varphi))(x_{i;k})$ is Lorentzian.
\end{proof}

\begin{example}
\label{ex:K-d,K-efd}
Let $G$ be as in Figure~\ref{fig:example-G^-} and $\mathbf a = (2,1,1,1,-5)$. Let $\mathbf d \in 3\Delta_{S_G}\cap\ZZ^{S_G}$ be the vector $e_{2;1} + e_{2;2} + e_{4;2}$; this integer vector takes the value 1 on the edges $(2,5;1), (2,5;2), (4,5;2) \in S_G$ and takes the value 0 everywhere else. Thus $\mathbf a|_{[n]} - \ef{\mathbf d} = (2,-1,1,0)$. The matrix $K_{\mathbf d}$ is given by
\[
\begin{bmatrix}
k_{(1;1),(1;1)} & k_{(1;1),(2;1)} & k_{(1;1),(2;2)} & k_{(1;1),(4;1)} & k_{(1;1),(4;2)} \\ 
k_{(2;1),(1;1)} & k_{(2;1),(2;1)} & k_{(2;1),(2;2)} & k_{(2;1),(4;1)} & k_{(2;1),(4;2)} \\ 
k_{(2;2),(1;1)} & k_{(2;2),(2;1)} & k_{(2;2),(2;2)} & k_{(2;2),(4;1)} & k_{(2;2),(4;2)} \\ 
k_{(4;1),(1;1)} & k_{(4;1),(2;1)} & k_{(4;1),(2;2)} & k_{(4;1),(4;1)} & k_{(4;1),(4;2)} \\ 
k_{(4;2),(1;1)} & k_{(4;2),(2;1)} & k_{(4;2),(2;2)} & k_{(4;2),(4;1)} & k_{(4;2),(4;2)} \\ 
\end{bmatrix}
=
\begin{bmatrix}
0 & 0 & 0 & 1 &1\\
0 & 0 & 0 & 1 & 1\\
0 & 0 & 0 & 1 & 1\\
1 & 1 & 1 & 2 & 2\\
1 & 1 & 1 & 2 & 2
\end{bmatrix}
\]
It is obtained from the matrix $K_{\ef{\mathbf d}}^-$ given by
\[
\begin{bmatrix}
k_{1,1}^- & k_{1,2}^- & k_{1,4}^- \\
k_{2,1}^- & k_{2,2}^- & k_{2,4}^- \\
k_{4,1}^- & k_{4,2}^- & k_{4,4}^-
\end{bmatrix}
=
\begin{bmatrix}
0 & 0 & 1 \\
0 & 0 & 1 \\
1 & 1 & 2
\end{bmatrix}
\]
first by repeating the second row $M(2,5) = 2$ times and repeating the third row $M(4,5) = 2$ times, to obtain
\[
\begin{bmatrix}
0 & 0 & 1\\
0 & 0 & 1\\
0 & 0 & 1\\
1 & 1 & 2\\
1 & 1 & 2
\end{bmatrix}
\]
and then repeating the second column $M(2,5) = 2$ times and repeating the third column $M(4,5) = 2$ times, to obtain
\[
\begin{bmatrix}
0 & 0 & 0 & 1 & 1\\
0 & 0 & 0 & 1 & 1\\
0 & 0 & 0 & 1 & 1\\
1 & 1 & 1 & 2 & 2\\
1 & 1 & 1 & 2 & 2
\end{bmatrix}
\]
The spectrum for $K_{\ef{\mathbf d}}^-$ is $\{-\sqrt 3 + 1, 0, \sqrt 3 + 1\}$ (which has at most one positive eigenvalue). 

In this example, the ranks of $K_{\ef{\mathbf d}}^-$ and $K_{\mathbf d}$ are both equal to $2$, thus they both have a total of $2$ nonzero eigenvalues.  The matrix $K_{\ef{\mathbf d}}^-$ is the principal submatrix of $K_{\mathbf d}$ corresponding to the 1st, 2nd, and 4th rows and columns of $K_{\mathbf d}$. Cauchy's Interlacing Theorem says that the smallest eigenvalue of $K_{\mathbf d}$ is at most $-\sqrt 3 + 1 < 0$. Hence $K_{\mathbf d}$ has at most one positive eigenvalue. \end{example}
 
\begin{definition}
For a graph $G$ as in the conventions of this section, denote by $G^r$ the graph obtained by ``flipping'' $G|_{[n]}$, that is, $V(G^r) = [n]$ and
\[
(i,j) \in E(G^r) \iff (n+1-j, n+1-i) \in E(G|_{[n]}).
\]
Equivalently, $G^r$ is obtained by relabeling the vertices of $G|_{[n]}$ by the map $i\mapsto n+1-i$ and reversing the orientation of edges. See Figure~\ref{fig:example-G^*} for an example.
\end{definition}

\begin{figure}[ht]
\begin{center}
\includegraphics[scale=0.6]{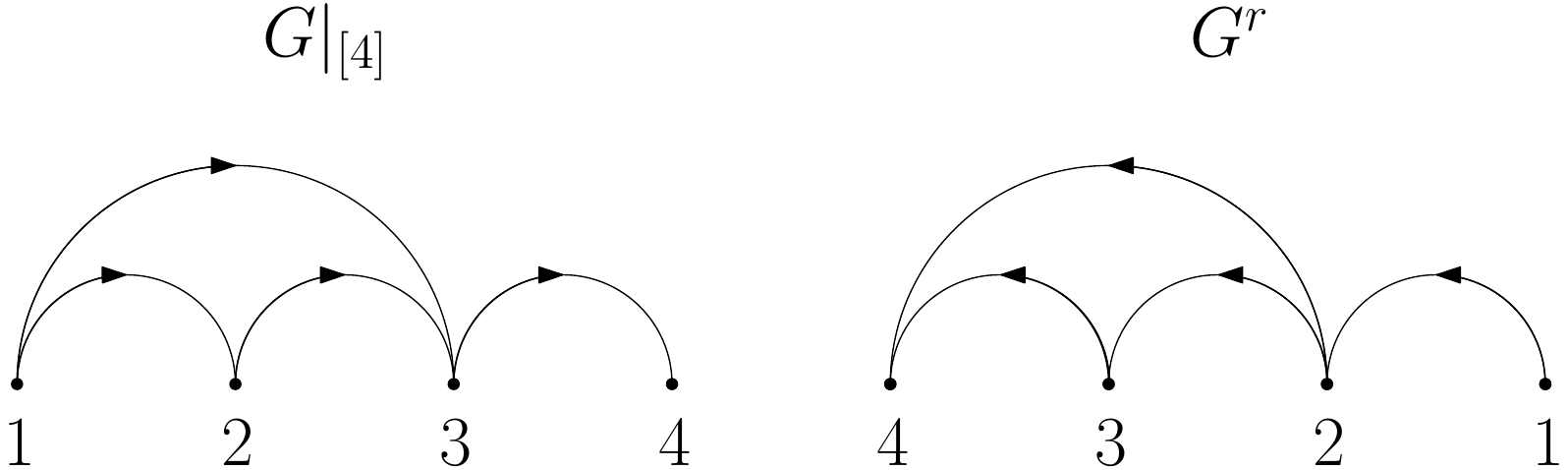}
\end{center}
\caption{The graphs $G|_{[4]}$ and $G^r$ are shown, for $G$ as in Figure~\ref{fig:example-G^-} and Example~\ref{ex:K-d,K-efd}.}
\label{fig:example-G^*}
\end{figure}

The symmetry between $G|_{[n]}$ and $G^r$ underpins the following lemma, crucial for the proof of Theorem~\ref{thm:sigmaphi-sigmapsi-lorentzian}:

\begin{lemma}[{\cite[Corollary 2.4]{mm2017}}]
\label{lem:graph-flip}
For every $c_1, \dots, c_n \in \ZZ_{\geq 0}$, the formula
\[
K_{G|_{[n]}}(c_1, c_2, \dots, c_{n-1}, c_n) = K_{G^r}(-c_n, -c_{n-1}, \dots, -c_2, -c_1)
\]
holds.
\end{lemma}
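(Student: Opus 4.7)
The plan is to prove the identity bijectively by matching integer $(c_1,\dots,c_n)$-flows on $G|_{[n]}$ with integer $(-c_n,\dots,-c_1)$-flows on $G^r$. Recall that by definition $K_{G|_{[n]}}(c_1,\dots,c_n)$ counts the number of $f\in\mathbb Z_{\geq 0}^{E(G|_{[n]})}$ with $M_{G|_{[n]}}f=(c_1,\dots,c_n)$, and similarly for $K_{G^r}$. Since the relabeling $i \mapsto n+1-i$ induces a bijection of edge sets $E(G|_{[n]})\leftrightarrow E(G^r)$ given by $(i,j)\leftrightarrow (n+1-j,n+1-i)$ (note that $i<j$ if and only if $n+1-j<n+1-i$, so edge orientations are preserved under the convention that edges go from smaller to larger vertex), I would simply send a flow $f$ on $G|_{[n]}$ to the flow $\tilde f$ on $G^r$ defined by $\tilde f(n+1-j,n+1-i) = f(i,j)$. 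This is visibly a bijection of nonnegative integer vectors.

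The key step is to check the netflow condition. Edges of $G|_{[n]}$ incident to vertex $i$ come in two kinds: edges $(i,j)$ with $j>i$ which are outgoing at $i$, and edges $(k,i)$ with $k<i$ which are incoming at $i$. Under the bijection, $(i,j)$ is sent to $(n+1-j,n+1-i)$, which is \emph{incoming} at vertex $n+1-i$ of $G^r$; symmetrically, $(k,i)$ is sent to $(n+1-i,n+1-k)$, which is \emph{outgoing} at vertex $n+1-i$. Thus the roles of ``in'' and ``out'' at the corresponding vertex are swapped, so
\[
(M_{G^r}\tilde f)_{n+1-i} \;=\; -(M_{G|_{[n]}}f)_i \;=\; -c_i.
\]
Reindexing by $i' = n+1-i$, the netflow vector of $\tilde f$ on $G^r$ reads $(-c_n,-c_{n-1},\dots,-c_1)$, as required.

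Since the map $f\mapsto \tilde f$ is clearly invertible (the relabeling of vertices and of edges is a bijection), it restricts to a bijection between the set of integer points of $\mathcal F_{G|_{[n]}}(c_1,\dots,c_n)$ and the set of integer points of $\mathcal F_{G^r}(-c_n,\dots,-c_1)$. Counting both sides yields the claimed identity. There is no substantive obstacle here; the only subtlety, which I would state carefully, is to track the sign flip of the netflow under reversal of edge orientations and the reindexing of vertices by $i\mapsto n+1-i$. This is essentially the standard symmetry of the $A_{n-1}$ root system that sends $e_i-e_j$ to $-(e_{n+1-i}-e_{n+1-j}) = e_{n+1-j}-e_{n+1-i}$, which is the conceptual reason the identity holds.
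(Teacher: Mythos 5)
Your bijection is correct: the vertex relabeling $i\mapsto n+1-i$ induces an orientation-preserving bijection of edge sets, and your sign bookkeeping $(M_{G^r}\tilde f)_{n+1-i}=-(M_{G|_{[n]}}f)_i$ is exactly right, so integer flows are matched and the counts agree. Note that the paper does not prove this lemma at all — it is imported as a citation to \cite[Corollary 2.4]{mm2017} — and your argument is precisely the standard one behind that result, so there is nothing to compare beyond observing that you have supplied the omitted proof. One small remark worth making: your bijection never uses the hypothesis $c_i\in\ZZ_{\geq 0}$ (which would in fact force all $c_i=0$, since the netflow coordinates of any flow sum to zero), and it is good that it doesn't, because the lemma is later applied, e.g.\ in Proposition~\ref{prop:entrywise-formula}, to vectors $\mathbf a|_{[n]}-\ef{\mathbf d}-e_{n+1-i}-e_{n+1-j}$ that can have negative entries; your proof gives the identity for arbitrary integer netflow vectors, which is the generality actually needed.
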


\begin{definition}
\label{defn:tilde-k}
Let $G^-$ be a graph satisfying the conventions of this section as well as $M(i,n+1) \leq 1$ for every $i$, see Definition~\ref{defn:G^-}. Denote by $P_T$ the permutation matrix corresponding to the order-reversing permutation $i\mapsto |T_G|+1-i$; this is the matrix consisting of 1's on the antidiagonal and 0 everywhere else. Observe that
\[
\widetilde K_{\ef{\mathbf d}}^- \stackrel{\rm def}= P_T K_{\ef{\mathbf d}}^-P_T = P_T^{-1}K_{\mathbf d} P_T
\]
has the same spectrum as $K_{\ef{\mathbf d}}^-$ since it is obtained by conjugation. Motivated by the following Proposition~\ref{prop:entrywise-formula}, as well as the fact that $\widetilde K_{\ef{\mathbf d}}^-$ is obtained from $K_{\ef{\mathbf d}}^-$ by permuting the rows and columns according to order-reversing permutation $i\mapsto |T_G| + 1 - i$, we choose to index the rows and columns of $\widetilde K_{\ef{\mathbf d}}^-$ by $\{n+1-i\colon i \in T_G\}$. See Example~\ref{ex:tilde-K-efd}.
\end{definition}

\begin{example}
\label{ex:tilde-K-efd}
Let $G$ be as in Figure~\ref{fig:example-G^-}, Example~\ref{ex:K-d,K-efd}, and Figure~\ref{fig:example-G^*}. The entries of $K_{\ef{\mathbf d}^-}$ are given by
\[
\begin{bmatrix}
k_{1,1}^- & k_{1,2}^- & k_{1,4}^- \\
k_{2,1}^- & k_{2,2}^- & k_{2,4}^- \\
k_{4,1}^- & k_{4,2}^- & k_{4,4}^-
\end{bmatrix}
=
\begin{bmatrix}
0 & 0 & 1 \\
0 & 0 & 1 \\
1 & 1 & 2
\end{bmatrix}.
\]
Then the matrix $\widetilde K_{\ef{\mathbf d}}^- = P_TK_{\ef{\mathbf d}}^- P_T$ is given by
\[
\begin{bmatrix}
\widetilde k_{4,4}^- & \widetilde k_{4,3}^- & \widetilde k_{4,1}^- \\
\widetilde k_{3,4}^- & \widetilde k_{3,3}^- & \widetilde k_{3,1}^- \\
\widetilde k_{1,4}^- & \widetilde k_{1,3}^- & \widetilde k_{1,1}^-
\end{bmatrix}
=
\begin{bmatrix} 0 & 0 & 1 \\ 0 & 1 & 0 \\ 1 & 0 & 0\end{bmatrix}
\begin{bmatrix}
0 & 0 & 1 \\
0 & 0 & 1 \\
1 & 1 & 2
\end{bmatrix}
\begin{bmatrix} 0 & 0 & 1 \\ 0 & 1 & 0 \\ 1 & 0 & 0\end{bmatrix}
=
\begin{bmatrix}
2 & 1 & 1 \\
1 & 0 & 0 \\
1 & 0 & 0
\end{bmatrix}.
\]
\end{example}

\begin{proposition}
\label{prop:entrywise-formula}
The entries of $\widetilde K_{\ef{\mathbf d}}^-$ are given by the following formula. Let
\[
\mathbf z=(\ef{\mathbf d}_n - a_n, \ef{\mathbf d}_{n-1} - a_{n-1}, \dots, \ef{\mathbf d}_2 - a_2, \ef{\mathbf d}_1 - a_1).
\]
Then the $i,j$-th entry $\widetilde k_{i,j}^-$ of $\widetilde K_{\ef{\mathbf d}}^-$ is $K_{G^r}(\mathbf z + e_i + e_j)$.
\end{proposition}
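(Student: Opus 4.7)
The approach is to unwind the chain of definitions from $\widetilde K_{\ef{\mathbf d}}^-$ through $K_{\ef{\mathbf d}}^-$ and $K_{G|_{[n]}}$ to $K_{G^r}$. The main tool will be Lemma~\ref{lem:graph-flip}, which rewrites values of $K_{G|_{[n]}}$ as values of $K_{G^r}$ via reverse-negation of the netflow vector. The $P_T$-conjugation appearing in Definition~\ref{defn:tilde-k}, together with the re-labeling $t \leftrightarrow n+1-t$, is designed precisely to align at the matrix level with this reverse-negation operation on netflows.

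Concretely, I would first use the antidiagonal structure of $P_T$ to identify the entry $\widetilde k_{i,j}^-$ of $\widetilde K_{\ef{\mathbf d}}^-$ (with $i, j$ in the new labeling $\{n+1-t : t \in T_G\}$) as an entry $k_{i',j'}^-$ of $K_{\ef{\mathbf d}}^-$, where $i', j' \in T_G$ are determined by the $P_T$-reversal. By the defining formula in Lemma~\ref{lem:G^--implies-G}, and since $G^-|_{[n]} = G|_{[n]}$ (the extension operation modifies only edges to the sink), we have
\[
k_{i',j'}^- = K_{G|_{[n]}}\bigl(\mathbf a|_{[n]} - \ef{\mathbf d} - e_{i'} - e_{j'}\bigr).
\]
Applying Lemma~\ref{lem:graph-flip} then rewrites this quantity as $K_{G^r}$ evaluated at the reverse-negation of the netflow $\mathbf a|_{[n]} - \ef{\mathbf d} - e_{i'} - e_{j'}$.

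A direct coordinatewise calculation using the explicit definition of $\mathbf z$ as the reverse-negation of $\mathbf a|_{[n]} - \ef{\mathbf d}$ shows that the $q$-th coordinate of the reverse-negated vector is $\mathbf z_q + [q = n+1-i'] + [q = n+1-j']$, which under the identification prescribed by the new labeling equals $(\mathbf z + e_i + e_j)_q$. The main obstacle will be bookkeeping: the $P_T$-reversal, the re-labeling $i \leftrightarrow n+1-i$, and the reverse-negation in Lemma~\ref{lem:graph-flip} each contribute an order-reversing operation, and one must verify carefully that these three operations compose to produce the clean identification asserted in the proposition. Once this correspondence is made explicit, the formula follows by direct substitution.
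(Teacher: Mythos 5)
Your proposal is correct and follows essentially the same route as the paper's proof: identify $\widetilde k_{i,j}^-$ with $k_{n+1-i,n+1-j}^-$ via the $P_T$-conjugation and relabeling, invoke the defining formula $k_{i',j'}^- = K_{G|_{[n]}}(\mathbf a|_{[n]} - \ef{\mathbf d} - e_{i'} - e_{j'})$, and apply Lemma~\ref{lem:graph-flip} together with the coordinatewise check that the reverse-negated netflow is $\mathbf z + e_i + e_j$. The only minor slip is calling $G^-$ ``the extension'' (that term refers to $G^{\textup{ex}}$), but the fact you use, $G^-|_{[n]} = G|_{[n]}$, is correct and indeed already built into the definition of $K_{\ef{\mathbf d}}^-$.
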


\begin{proof}
Note that the $i,j$-th entry $\widetilde k_{i,j}^-$ of $\widetilde K_{\ef{\mathbf d}}^-$ is precisely
\[
k_{n+1-i,n+1-j}^- = K_{G|_{[n]}}(\mathbf a|_{[n]} - \ef{\mathbf d} - e_{n+1-i} - e_{n+1-j}) = K_{G^r}(\mathbf z + e_i + e_j),
\]
where the last equality is an application of Lemma~\ref{lem:graph-flip}.
\end{proof}

The final piece required to prove Theorem~\ref{thm:sigmaphi-sigmapsi-lorentzian} is the existence of a quadratic Lorentzian polynomial whose Hessian is $\widetilde K_{\ef{\mathbf d}}^-$. We are ready to accomplish this now:

\begin{proof}[Proof of Theorem~\ref{thm:sigmaphi-sigmapsi-lorentzian}]
By Proposition~\ref{prop:varphi-implies-psi}, it suffices to show that $(N(\sigma_{G(\mathbf a)}^\varphi))(x_{i;k})$ is Lorentzian, and by Lemma~\ref{lem:G^--implies-G}, it suffices to show that $(N(\sigma_{G^-(\mathbf a)}^\varphi))(x_i)$ is Lorentzian. By Lemma~\ref{lem:simple-at-sink-to-matrix} applied to $G^-$, we need to show that $K_{\mathbf d} = K_{\ef{\mathbf d}}^-$ has at most one positive eigenvalue for every $\mathbf d \in (\sum_{i=1}^n a_i - 2)\Delta_{S_G}\cap\ZZ^{S_G} = (\sum_{i=1}^n a_i - 2)\Delta_{T_G}\cap\ZZ^{T_G}$. In light of the discussion in Definition~\ref{defn:tilde-k}, it suffices to show, for every lattice point $\ef{\mathbf d} \in (\sum a_i - 2)\Delta_{T_G}\cap\ZZ^{T_G}$, that the matrix $\widetilde K_{\ef{\mathbf d}}^-$ has at most one positive eigenvalue.

For brevity of notation, we introduce
\[
\mathbf z = (z_1, \dots, z_n) = (\ef d_n - a_n, \dots, \ef d_1 - a_1); \hspace{1cm} z_{\min}\stackrel{\rm def}= \min_{i \in [n]}z_i.
\]
Note that  $z_{\min} < 0$, since  $\sum z_i = -2$. Let $\widetilde G$ be the graph on the vertex set $[n+1-z_{\min}]$ with edges
\[
E(\widetilde G) = E(G^r)\cup\{(i,j)\colon i\leq j \text{ and } n+1 \leq j\}.
\]
Set
\begin{alignat*}{2}
&N\stackrel{\rm def}= n-z_{\min}; \qquad &&\widetilde{\mathbf z} \stackrel{\rm def}=(z_1, \dots, z_n, \underbrace{0,\dots,0}_{-z_{\min}});\\ &\widetilde{\mathbf x}\stackrel{\rm def}=(x_1, \dots, x_{N+1}); \qquad &&\widetilde{\mathbf o} \stackrel{\rm def}= (\text{outd}_1 - 1, \dots, \text{outd}_N - 1),
\end{alignat*}
where $\text{outd}_i$ denotes the outdegree of $\widetilde G$ at vertex $i$. Note that $\widetilde{\mathbf z} +\widetilde{\mathbf o} \geq \mathbf 0$, since for $i \leq n$ we have 
\[
\widetilde{\mathbf o}_i = \text{outd}_i - 1 \geq |\{(i,j)\colon n+1\leq j\leq N+1\}|-1 = N-n = -z_{\min},
\]
and for $i \geq n+1$ we have $\widetilde{\mathbf z}_i = 0$. The Baldoni-Vergne formulas, Theorem~\ref{thm:baldoni-vergne}, applied to $\widetilde G$ says that
\[
\vol\,\mathcal F_{\widetilde G}(\widetilde{\mathbf x}) = \sum_{\substack{\mathbf j\colon \mathbf j\geq-\widetilde{\mathbf o}\\j_1 + \dots + j_N = 0}}K_{\widetilde G}(\mathbf j)\frac{(\widetilde{\mathbf x}|_{[N]})^{\mathbf j + \widetilde{\mathbf o}}}{(\mathbf j + \widetilde{\mathbf o})!}.
\]
By Theorem~\ref{lem:vol-is-lorentzian}, $\vol\,\mathcal F_{\widetilde G}(\widetilde{\mathbf x})$ is Lorentzian. Hence, so is
\[
\partial_{\widetilde{\mathbf z} + \widetilde{\mathbf o}} \vol\,\mathcal F_{\widetilde G}(\widetilde{\mathbf x}) = \sum_{\substack{\mathbf j\colon \mathbf j \geq \widetilde{\mathbf z}\\j_1 + \dots + j_N = 0}}K_{\widetilde G}(\mathbf j)\frac{(\widetilde{\mathbf x}|_{[N]})^{\mathbf j - \widetilde{\mathbf z}}}{(\mathbf j - \widetilde{\mathbf z})!},
\]
where the equality is an application of Lemma~\ref{lem:NDN-is-coeff}. Let $A$ be the $N\times N$ diagonal matrix whose $i$th diagonal entry is 1 if $i \in \{n+1-j\colon j \in T_G\}$ and 0 otherwise; by Theorem~\ref{lem:f(Av)} applied to $f = \partial_{\widetilde{\mathbf z} + \widetilde{\mathbf o}}\vol\,\mathcal F_{\widetilde G}(\widetilde{\mathbf x})$ and $A$ as above, the quadratic polynomial
\[
\partial_{\widetilde{\mathbf z} + \widetilde{\mathbf o}}\vol\,\mathcal F_{\widetilde G}(A\widetilde{\mathbf x})
\]
is Lorentzian and its Hessian has at most one positive eigenvalue. The rows and columns of this Hessian are naturally indexed by $\{n+1-j\colon j \in T_G\}$, and its $i,j$-th entry is the coefficient of $\frac{x_ix_j}{(e_i+e_j)!}$ in $\partial_{\widetilde{\mathbf z} + \widetilde{\mathbf o}}\vol\,\mathcal F_{\widetilde G}(A\widetilde{\mathbf x})$. This coefficient is $K_{G^r}(\widetilde{\mathbf z} + e_i + e_j)$. By Proposition~\ref{prop:entrywise-formula}, its Hessian is precisely $\widetilde K_{\ef{\mathbf d}}^-$.

We have thus shown that $\widetilde K_{\ef{\mathbf d}}^-$ has at most one positive eigenvalue, completing the proof.
\end{proof}

\section{On projections of polytopes in general}
\label{sec:proj}
 
 Recall the question stemming from Theorem \ref{thm:sigmaphi-sigmapsi-lorentzian}, as well as other examples mentioned in the Introduction: 
   
\begin{question1*}    {\it What conditions on the polytope/projection pair ascertain that the normalization of the  projection of the integer point transform of the polytope is Lorentzian?  }\end{question1*}  

Note that  $\varphi$ is a projection onto a coordinate hypersurface and the flow polytope $\F_{G}({\bf a})$ we are projecting lives in the nonnegative orthant. It is worth  noting that once we have a Lorentzian polynomial $f$ which equals the normalized projection onto a  coordinate hypersurface of an integer point transform of a polytope which belongs to the nonnegative orthant, then any derivative of $f$ is (1) Lorentzian, (2) also  the normalized projection onto a  coordinate hypersurface of an integer point transform of a polytope which belongs to the nonnegative orthant.   We formalize this observation here.
  
\begin{definition}
A polytope/projection pair $(P,\varphi)$ is said to be \textbf{admissible} if the polytope $P\subseteq\RR^m$ has vertices in $\ZZ^m$ and lives in the nonnegative orthant 
\[
H_+^m \overset{\rm def}= \{(x_1, \dots, x_m)\colon x_i \geq 0 \textup{ for all } i \in [m]\};
\]
we also require that $\varphi$ is a projection onto a coordinate $n$-dimensional hypersurface. Without loss of generality, we may assume $\varphi$ is projection onto the first $n$ components.
\end{definition}
Observe that $\varphi(P)\subseteq H_+^n$ lives inside the nonnegative orthant of $\RR^n$ and also has integral vertices.

To an admissible pair, we associate a polynomial $\sigma_P^\varphi$ obtained by projecting the integer point transform of $P$ according to $\varphi$; specifically,
\[
\sigma_P^\varphi(\mathbf x)\overset{\rm def}= \sum_{\mathbf p \in P\cap\ZZ^m} \mathbf x^{\varphi(\mathbf p)} = \sum_{\mathbf p \in \varphi(P)\cap\ZZ^n}(\varphi^{-1}(\mathbf p)\cap \ZZ^m)\mathbf x^{\mathbf p},
\]
where $\mathbf x = (x_1, \dots, x_n)$, and $\varphi^{-1}(\mathbf p)$ is interpreted as a subset of $P$. (Note that $\varphi(P)\subseteq H_+^n$ implies $\sigma_P^\varphi$ is actually a polynomial.)

\begin{proposition}
\label{prop:partial-f}
Let $f(x_1, \dots, x_n)$ be a Lorentzian polynomial so that $f = N(\sigma_P^\varphi)$ for some admissible pair $(P,\varphi)$. Then we have
\[
\frac\partial{\partial x_i}f = N(\sigma_{P_i}^\varphi), \hspace{0.4cm}\textup{ where }\hspace{0.4cm} P_i\overset{\rm def}= (P\cap H_{+i}^m) + \{-e_i\},
\]
where $H_{+i}^m = \{(x_1, \dots, x_m)\colon x_i \geq 1, \textup{ and } x_j\geq 0 \textup{ for all } j \in [m]\}$.
\end{proposition}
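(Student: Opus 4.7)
The plan is to unwind both sides of the claimed identity and match them directly. Starting from
\[
f(\mathbf x) = N(\sigma_P^\varphi)(\mathbf x) = \sum_{\mathbf p \in P\cap\ZZ^m}\frac{\mathbf x^{\varphi(\mathbf p)}}{\varphi(\mathbf p)!},
\]
I would apply $\partial/\partial x_i$ term by term, using the observation (essentially Lemma~\ref{lem:NDN-is-coeff}) that differentiating a normalized monomial $\mathbf x^\alpha/\alpha!$ produces $\mathbf x^{\alpha - e_i}/(\alpha - e_i)!$ when $\alpha_i \geq 1$, and zero otherwise. The surviving terms in $\partial f/\partial x_i$ are therefore indexed by $\{\mathbf p \in P \cap \ZZ^m \colon \varphi(\mathbf p)_i \geq 1\}$.

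The core step is then to reindex this sum as a sum over $P_i \cap \ZZ^m$. Because $\varphi$ is projection onto the first $n$ coordinates and $i \in [n]$, the condition $\varphi(\mathbf p)_i \geq 1$ is equivalent to $p_i \geq 1$, i.e., $\mathbf p \in P \cap H_{+i}^m$. The translation $\mathbf p \mapsto \mathbf p - e_i$ (with $e_i$ the $i$th standard basis vector in $\RR^m$) is a bijection
\[
P \cap H_{+i}^m \cap \ZZ^m \ \longleftrightarrow\ P_i \cap \ZZ^m
\]
that commutes with $\varphi$ in the sense that $\varphi(\mathbf p - e_i) = \varphi(\mathbf p) - e_i$ (the right-hand $e_i$ now lying in $\RR^n$). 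Substituting $\mathbf q = \mathbf p - e_i$ yields
\[
\frac{\partial f}{\partial x_i} = \sum_{\mathbf q \in P_i \cap \ZZ^m}\frac{\mathbf x^{\varphi(\mathbf q)}}{\varphi(\mathbf q)!} = N(\sigma_{P_i}^\varphi),
\]
as desired.

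I do not expect any real obstacle; the proof is bookkeeping. The one point meriting a brief check is that $\sigma_{P_i}^\varphi$ is a genuine polynomial. This follows because $\mathbf p \in P \cap H_{+i}^m$ forces $p_i \geq 1$, so $\mathbf p - e_i \in H_+^m$; hence $P_i \subseteq H_+^m$ and $\varphi(P_i) \subseteq H_+^n$, ensuring that the exponents $\varphi(\mathbf q)$ appearing in $\sigma_{P_i}^\varphi$ lie in $\NN^n$. I would also flag, but not dwell on, the minor subtlety that $(P_i,\varphi)$ need not itself be admissible in the strict sense, since slicing $P$ along $x_i = 1$ can introduce non-integral vertices; this does not affect the statement, since $\sigma_{P_i}^\varphi$ only depends on $P_i \cap \ZZ^m$.
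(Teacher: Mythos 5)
Your proof is correct and is essentially the paper's argument: both hinge on the observation that, for a coordinate projection with $i\in[n]$, the condition $\varphi(\mathbf p)_i\geq 1$ is the same as $p_i\geq 1$, and then reindex via the translation $\mathbf p\mapsto \mathbf p - e_i$, which commutes with $\varphi$ (the paper packages the normalization step through Lemma~\ref{lem:NDN-is-coeff} and reindexes downstairs on $\varphi(P)$ and its fibers, while you reindex upstairs on $P\cap\ZZ^m$ — the same bookkeeping). Your closing caveat that $P_i$ need not have integral vertices, so $(P_i,\varphi)$ is not literally admissible even though $\sigma_{P_i}^\varphi$ is still a well-defined polynomial depending only on $P_i\cap\ZZ^m$, is a fair point that the paper's Remark~\ref{rem:varphi(Pi)} states without this qualification.
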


\begin{proof}
By Lemma~\ref{lem:NDN-is-coeff}, it suffices to show that if
\begin{equation}
\label{eqn:sigmaPi-from-sigmaP}
\sigma_P^\varphi(\mathbf x) = \sum_\alpha c_\alpha\mathbf x^\alpha,\hspace{0.4cm}\textup{ then } \hspace{0.4cm}\sigma_{P_i}^\varphi(\mathbf x) = \sum_{\alpha\colon \alpha_i \geq 1}c_\alpha\mathbf x^{\alpha - e_i}.
\end{equation}
Since $e_i \in\RR^n = \textup{im}\,\varphi$, we have $\varphi(P_i) = \varphi(P\cap H_{+i}^m) + \{-e_i\} = \varphi(P)\cap H_{+i}^n + \{-e_i\}$. A point $\beta \in \varphi(P_i)$ if and only if $\alpha \overset{\rm def}=\beta + e_i \in \varphi(P)\cap H_{+i}^n$. Furthermore, the fiber $\varphi^{-1}(\beta)\cap P_i$ is equal, up to translation by $e_i$, to the fiber $\varphi^{-1}(\alpha)\cap P$. Thus
\[
\sigma_{P_i}^\varphi (\mathbf x) = \sum_{\beta\in\varphi(P_i)\cap\ZZ^n}(\varphi^{-1}(\beta)\cap P_i \cap\ZZ^n)\mathbf x^\beta = \sum_{\substack{\alpha\in\varphi(P)\cap\ZZ^n\\\alpha_i \geq 1}} (\varphi^{-1}(\alpha)\cap P \cap\ZZ^n)\mathbf x^{\alpha - e_i}.
\]
Comparing the above expression to the definition of $\sigma_P^\varphi$, we have verified Equation~\eqref{eqn:sigmaPi-from-sigmaP} holds.
\end{proof}

\begin{remark}
\label{rem:varphi(Pi)}
We emphasize that the pair $(P_i,\varphi)$ is admissible when $(P,\varphi)$ is admissible. Furthermore, as discussed in the proof of Proposition~\ref{prop:partial-f},
\[
\varphi(P_i) = \varphi(P)\cap H_{+i}^n + \{-e_i\}.\qedhere
\]
\end{remark}

     \medskip
   
 We conclude by another intriguing question stemming from our work: \textit{which Lorentzian polynomials arise naturally as normalized  projections of   integer point transforms of   polytopes?}

\section*{Acknowledgments} We are grateful to Ricky I.\ Liu as well as Dave Anderson and Eugene Gorsky for inspiring conversations related to the idea of polytope/projection pairs. We are grateful to June Huh for inspiring conversations about Lorentzian polynomials as well as Louis Billera and Avery St.\ Dizier for inspiring conversation about polytopes in general and flow polytopes in particular. We are also grateful to Jacob Matherne for many motivating conversations on the above topics.

\begin{bibdiv}
\begin{biblist}

\bib{bh2019}{article}{
   author={Br\"and\'en, Petter},
   author={Huh, June},
   title={Lorentzian polynomials},
   eprint={arXiv:1902.03719},
   date={2019}
}
\bib{bv2008}{article}{
   author={Baldoni, Welleda},
   author={Vergne, Mich\`ele},
   title={Kostant partitions functions and flow polytopes},
   journal={Transform. Groups},
   volume={13},
   date={2008},
   number={3--4},
   pages={447--469}
%   issn={1083-4362},
%   review={\MR{2452600}},
%   doi={10.1007/s00031-008-9019-8},
}

\bib{hmms2019}{article}{
   author={Huh, June},
   author={Matherne, Jacob P.},
   author={M\'esz\'aros, Karola},
   author={{St.\ Dizier}, Avery},
   title={Logarithmic concavity of Schur and related polynomials},
   eprint={arXiv:1906.09633},
   date={2019}
}
\bib{mm2017}{article}{
   author={M\'esz\'aros, Karola},
   author={Morales, Alejandro H.},
   title={Volumes and Ehrhart polynomials of flow polytopes},
   journal={Math.\ Z.},
   volume={293},
   date={2019},
   number={3--4},
   pages={1369--1401}
}
\bib{ms2017}{article}{
   author={M\'esz\'aros, Karola},
   author={{St.\ Dizier}, Avery},
   title={From generalized permutahedra to Grothendieck polynomials via flow polytopes},
   eprint={arXiv:1705.02418},
   date={2017}
}
\bib{parlett1998}{book}{
   author={Parlett, Beresford N.},
   title={The Symmetric Eigenvalue Problem},
   series={Classics in Applied Mathematics},
   volume={20},
   publisher={Society for Industrial and Applied Mathematics (SIAM), Philadelphia}
   date={1998}
   %note={Ch 10. Eigenvalue Bounds}
   %doi={10.1137/1.9781611971163.ch10},
   %URL={https://epubs.siam.org/doi/abs/10.1137/1.9781611971163.ch10}
}
\bib{postnikov2009}{article}{
   author={Postnikov, Alexander},
   title={Permutohedra, associahedra, and beyond},
   journal={Int.\ Math.\ Res.\ Not.}
   date={2009},
   number={6},
   pages={1026--1106}
}

\bib{schrijver2003}{book}{
   author={Schrijver, Alexander},
   title={Combinatorial Optimization},
   note={Polyhedra and efficiency. Vol.\ B},
   series={Algorithms and Combinatorics},
   volume={24},
   publisher={Springer-Verlag, Berlin},
   date={2003} 
   %Matroids, trees, stable sets, Chapters 39--69.
}
\end{biblist}
\end{bibdiv}

\end{document}